\newtheorem{theorem}{Theorem}[section]
\newtheorem{lemma}[theorem]{Lemma}
\newtheorem{proposition}[theorem]{Proposition}
\newtheorem{corollary}[theorem]{Corollary}
\theoremstyle{definition}
\newtheorem{definition}[theorem]{Definition}
\newtheorem{remark}[theorem]{Remark}
\newtheorem*{notation}{Notation}
\DeclareMathOperator{\Ext}{Ext}
\DeclareMathOperator{\Hom}{Hom}
\DeclareMathOperator{\Tor}{Tor}
\DeclareMathOperator{\cok}{cok}
\DeclareMathOperator{\im}{Im}
\newcommand{\cat}[1]{\mathcal{#1}}           
\newcommand{\class}[1]{\mathcal{#1}}   
\newcommand{\Z}{\mathbb{Z}}
\newcommand{\Q}{\mathbb{Q/Z}}
\newcommand{\mathcolon}{\colon\,} 
\newcommand{\ch}{\textnormal{Ch}(R)}
\newcommand{\cha}[1]{\textnormal{Ch}(\mathcal{#1})}
\newcommand{\rmod}{R\text{-Mod}}
\newcommand{\tilclass}[1]{\widetilde{\class{#1}}}
\newcommand{\dgclass}[1]{dg\widetilde{\class{#1}}}
\newcommand{\rightperp}[1]{#1^{\perp}}
\newcommand{\leftperp}[1]{{}^\perp #1}
\newcommand{\homcomplex}{\mathit{Hom}}
\begin{document}

\title{Absolutely clean, level, and Gorenstein AC-injective complexes}

\author{Daniel Bravo}
\address{ Instituto de Ciencias F\'isicas y Matem\'aticas\\
Universidad Austral de Chile \\ Valdivia, Chile.}
\email{daniel.bravo@uach.cl}
\thanks{The first author was partially supported by CONICYT/FONDECYT/INICIACION/11130324}

\author{James Gillespie}
\address{Ramapo College of New Jersey \\
         School of Theoretical and Applied Science \\
         505 Ramapo Valley Road \\
         Mahwah, NJ 07430}
\email[Jim Gillespie]{jgillesp@ramapo.edu}
\urladdr{http://phobos.ramapo.edu/~jgillesp/}

\date{\today}

\begin{abstract}
Absolutely clean and level $R$-modules were introduced in~\cite{bravo-gillespie-hovey} and used to show how Gorenstein homological algebra can be extended to an arbitrary ring $R$. This led to the notion of Gorenstein AC-injective and Gorenstein AC-projective $R$-modules. Here we study these concepts in the category of chain complexes of $R$-modules. We define, characterize and deduce properties of absolutely clean, level, Gorenstein AC-injective, and Gorenstein AC-projective chain complexes.  We show that the category $\ch$ of chain complexes has a cofibrantly generated model structure where every object is cofibrant and the fibrant objects are exactly the Gorenstein AC-injective chain complexes.  
\end{abstract}

\maketitle

\section{Introduction}\label{sec-intro}

The purpose of this paper is to begin the study of Gorenstein AC-homological algebra in the category of chain complexes of $R$-modules. By Gorenstein AC-homological algebra we mean the extension of Gorenstein homological algebra to arbitrary rings $R$ that was recently introduced in~\cite{bravo-gillespie-hovey}. In that paper it is shown how Gorenstein homological algebra can be extended to arbitrary  rings by replacing finitely generated modules with modules of type $FP_{\infty}$. In doing so, injective modules are replaced with what we call absolutely clean modules while flat modules are replaced with the level modules. In turn Gorenstein injective modules are replaced with the so-called Gorenstein AC-injective modules and likewise Gorenstein projective modules are replaced with the Gorenstein AC-projective modules. Although the definitions have changed, it is slight since these definitions coincide with the usual definitions for nice rings, and yet allow for a very nice theory of Gorenstein homological algebra to hold in full generality.  

This paper begins by studying the absolutely clean chain complexes in Section~\ref{sec-abs clean}. We characterize these complexes as the exact chain complexes whose cycle modules are each absolutely clean $R$-modules. We then go on to show that absolutely clean complexes satisfy the same nice properties that the absolutely clean $R$-modules were shown to satisfy in~\cite{bravo-gillespie-hovey}.

In Section~\ref{sec-Gorenstein AC-inj} we introduce and characterize the Gorenstein AC-injective chain complexes. According to Theorem~\ref{them-characterization of Gorenstein AC-injective complexes} these turn out to be the chain complexes $X$ for which each $X_n$ is a Gorenstein AC-injective $R$-module and such that any chain map $f : A \xrightarrow{} X$ is null homotopic whenever $A$ is an absolutely clean complex. This is inspired by a result we learned from~\cite{Ding-Chen-complex-models} where a similar characterization was given for Ding injective complexes. Indeed when $R$ is (left) coherent, Gorenstein AC-injective and Ding injective are the same thing; so this generalizes the result from~\cite{Ding-Chen-complex-models}. We go on in Section~\ref{sec-Gorenstein AC-inj} to prove Theorem~\ref{them-Gorenstein AC-injectives complete cotorsion pair}. This theorem shows that the category $\ch$ of chain complexes has an abelian model structure where each complex is cofibrant and the fibrant objects are precisely the Gorenstein AC-injectve complexes. 

In the last Section~\ref{sec-level complexes} we turn to the dual notions of level and Gorenstein AC-projective complexes. We use the tensor product of chain complexes introduced in~\cite{enochs-garcia-rozas} to study these complexes and derive similar results to the ones obtained in Sections~\ref{sec-abs clean} and~\ref{sec-Gorenstein AC-inj}. We also get chain complex versions of expected results from~\cite{bravo-gillespie-hovey}. For example we see in Corollary~\ref{cor-level complex cotorsion pair} that the level complexes are a covering class in the category of chain complexes. In Corollary~\ref{cor-duality} we see that a perfect duality exists between the absolutely clean complexes and level complexes.  
The reader will notice that we unfortunately have not proved the projective analog to Theorem~\ref{them-Gorenstein AC-injectives complete cotorsion pair}. One would expect the methods used in~\cite{bravo-gillespie-hovey} showing completeness of the Gorenstein AC-projective cotorsion pair in $R$-Mod to generalize to complexes. But even for $R$-Mod this was a quite technical problem, so it appears that this will require further attention in the future.

\

Finally, a note on prerequisites and our notational conventions. We have written this paper with the reader that has encountered the paper~\cite{bravo-gillespie-hovey} in mind. Having said this, all that is required of the reader is a good understanding of modules, chain complexes and homological algebra. We occasionally will use standard results from the theory of cotorsion pairs, for example from the book~\cite{enochs-jenda-book}.

Throughout the paper $R$ denotes a general ring with identity. Everything we do can be written in terms of either left or right $R$-modules. We will favor the left, so that by $R$-module will mean a left $R$-module, unless stated otherwise. The category of $R$-modules will be denoted $\rmod$ and the category of chain complexes of $R$-modules will be denoted $\ch$.

Our convention is that the differentials of our chain complexes \emph{lower} degree, so $\cdots
\xrightarrow{} X_{n+1} \xrightarrow{d_{n+1}} X_{n} \xrightarrow{d_n}
X_{n-1} \xrightarrow{} \cdots$ is a chain complex. Given a chain complex $X \in \ch$, the
\emph{$n^{\text{th}}$ suspension of $X$}, denoted $\Sigma^n X$, is the complex given by
$(\Sigma^n X)_{k} = X_{k-n}$ and $(d_{\Sigma^n X})_{k} = (-1)^nd_{k-n}$.
For a given $R$-module $M$, we denote the \emph{$n$-disk on $M$} by $D^n(M)$. This is the complex consisting only of $M \xrightarrow{1_M} M$ concentrated in degrees $n$ and $n-1$. We denote the \emph{$n$-sphere on $M$} by $S^n(M)$, and this is the complex consisting of $M$ in degree $n$ and 0 elsewhere.

Given two chain complexes $X$ and $Y$ we define $\homcomplex(X,Y)$ to
be the complex of abelian groups $ \cdots \xrightarrow{} \prod_{k \in
\Z} \Hom(X_{k},Y_{k+n}) \xrightarrow{\delta_{n}} \prod_{k \in \Z}
\Hom(X_{k},Y_{k+n-1}) \xrightarrow{} \cdots$, where $(\delta_{n}f)_{k}
= d_{k+n}f_{k} - (-1)^n f_{k-1}d_{k}$.
This gives a functor
$\homcomplex(X,-) \mathcolon \cha{A} \xrightarrow{} \textnormal{Ch}(\Z)$. Note that this functor takes exact sequences to left exact sequences,
and it is exact if each $X_{n}$ is projective. Similarly the contravariant functor $\homcomplex(-,Y)$ sends exact sequences to left exact sequences and is exact if each $Y_{n}$ is injective. It is an exercise to check that the homology satisfies $H_n[Hom(X,Y)] = \ch(X,\Sigma^{-n} Y)/\sim$ where $\sim$ is the usual relation of chain homotopic maps.

\section{Absolutely clean chain complexes}\label{sec-abs clean}

Absolutely clean $R$-modules were defined in~\cite{bravo-gillespie-hovey}. We wish to define and characterize the analog for chain complexes of $R$-modules. First we need to characterize chain complexes of type $FP_{\infty}$.

\subsection{Chain complexes of type $FP_{\infty}$}

A module $M$ over a ring $R$ is said to be of \textbf{type
$\mathbf{FP_{\infty}}$} if $M$ has a projective resolution by finitely
generated projective modules. If $R$ is (left) Noetherian, the (left) modules of type $FP_{\infty}$ are
precisely the finitely generated modules. If $R$ is (left) coherent,
the modules of type $FP_{\infty}$ are precisely the finitely presented
modules. Bieri showed in~\cite{bieri} that for any ring $R$, the class of $FP_{\infty}$ modules is thick. This means they are closed under retracts and whenever two out of three terms in a short exact sequence $0 \rightarrow M \rightarrow N  \rightarrow  L \rightarrow 0$ are type $FP_{\infty}$ then so is the third.

\begin{definition}
A chain complex $X$ is of \textbf{type
$\mathbf{FP_{\infty}}$} if $X$ has a projective resolution $\cdots \rightarrow P_2 \rightarrow P_1  \rightarrow  P_0 \rightarrow X \rightarrow 0 $ by finitely
generated projective complexes $P_i$.
\end{definition}

Recall that by definition, a chain complex is \emph{finitely generated} if whenever $X = \Sigma_{i \in I} S_i$, for some collection $\{S_i\}_{i \in I}$ of subcomplexes of $X$, then there exists a finite subset $J \subseteq I$ for which $X = \Sigma_{i \in J} S_i$. It is a standard fact that $X$ is finitely generated if and only if it is bounded (above and below) and each $X_n$ is finitely generated. Note that a type $FP_{\infty}$ complex $X$ is certainly finitely generated since by definition it is the image of the finitely generated complex $P_0$.

\begin{proposition}\label{prop-FP-infinitiy complexes}
A chain complex $X$ is of type $FP_{\infty}$ if and only if it is bounded and each $X_n$ is an $R$-module of type $FP_{\infty}$.
\end{proposition}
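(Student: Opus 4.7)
The plan is to prove each direction directly, with the backward direction requiring an inductive construction.

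For the forward direction, I note first that every finitely generated projective complex is bounded with each component a finitely generated projective module: projectivity in $\ch$ forces each degreewise component to be a projective module, and finite generation of the complex (as recalled just before the statement) forces boundedness together with degreewise finite generation. Given a resolution $\cdots \to P_1 \to P_0 \to X \to 0$ by such complexes, the exact functor $(-)_n$ yields a resolution of $X_n$ by finitely generated projective modules, so $X_n$ is of type $FP_{\infty}$. Moreover $X$, as a quotient of the bounded complex $P_0$, is itself bounded.

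For the backward direction, the strategy is to construct the resolution one stage at a time, repeatedly applying the following key step: given any bounded complex $Y$ whose components are all of type $FP_{\infty}$, produce a surjection $\phi \colon P \twoheadrightarrow Y$ from a finitely generated projective complex $P$ whose kernel still has those two properties. For each $k$ in the (finite) support of $Y$, choose a surjection $\pi_k \colon Q_k \twoheadrightarrow Y_k$ with $Q_k$ a finitely generated projective module, set $P := \bigoplus_k D^k(Q_k)$, and define $\phi$ degreewise on $P_n = Q_n \oplus Q_{n+1}$ by
\[
\phi_n(q,q') = \pi_n(q) + d_Y \pi_{n+1}(q').
\]
A quick check confirms $\phi$ is a chain map and is surjective in every degree (already $\pi_n$ alone suffices for surjectivity).

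The main obstacle is showing that $\ker \phi$ retains the $FP_{\infty}$ property degreewise. Writing $K := \ker \phi$ and $Z_k := \ker \pi_k$, the projection $(q,q') \mapsto q'$ yields a short exact sequence
\[
0 \to Z_n \to K_n \to Q_{n+1} \to 0,
\]
with the left embedding $z \mapsto (z,0)$ and surjectivity on the right coming from surjectivity of $\pi_n$. The sequence $0 \to Z_k \to Q_k \to Y_k \to 0$, combined with Bieri's thickness result for the $FP_{\infty}$ module class cited at the start of the subsection, gives $Z_k$ of type $FP_{\infty}$; the displayed sequence then gives $K_n$ of type $FP_{\infty}$. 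Boundedness of $K$ is immediate from boundedness of $P$. Iterating this construction indefinitely produces the required resolution of $X$ by finitely generated projective complexes. The entire argument leans critically on Bieri's thickness theorem — without it, no bootstrapping of the $FP_{\infty}$ property through the syzygy-type modules $Z_k$ would be available.
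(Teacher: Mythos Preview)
Your proof is correct, and the forward direction matches the paper's. For the backward direction, however, you take a genuinely different route. The paper does not invoke Bieri's thickness result; instead it argues stage by stage using the hierarchy of $FP_n$ conditions: after choosing a surjection $P_0 \twoheadrightarrow X$, the kernel $K$ is degreewise finitely generated because each $X_n$ is finitely presented; after surjecting $P_1 \twoheadrightarrow K$, the next kernel is degreewise finitely generated because each $X_n$ is of type $FP_2$; and so on. Thus the paper only ever needs that each successive syzygy is \emph{finitely generated}, extracting this from the appropriate $FP_n$ level of $X_n$. Your argument instead maintains the stronger invariant ``bounded with degreewise $FP_\infty$ components'' throughout, which makes the induction self-evidently repeatable and also yields an explicit model $P = \bigoplus_k D^k(Q_k)$ for the projective cover together with a clean identification of the kernel via $0 \to Z_n \to K_n \to Q_{n+1} \to 0$. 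The trade-off: your approach is tidier and more uniform but leans essentially on Bieri's theorem, whereas the paper's is more elementary (Schanuel-type reasoning suffices) at the cost of tracking the $FP_n$ index along the resolution.
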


\begin{proof}
Say $X$ is of type $FP_{\infty}$. Then it must be finitely generated, so it is bounded. Also, looking at the definition of a type $FP_{\infty}$ complex, we see that each complex $P_i$ must consist of finitely generated projective $R$-module in each degree. So immediately we get that each $X_n$ is of type $FP_{\infty}$ as an $R$-module.

Conversely, suppose $X$ is bounded and each $X_n$ is an $R$-module of type $FP_{\infty}$. Then it is easy to construct a surjection $f : P_0 \rightarrow X$ where $P_0$ is a finitely generated projective complex. Set $K = \ker{f}$ and note that it also must be bounded. Since each $X_n$ must also be finitely presented, it follows that each $K_n$ is finitely generated. Thus $K$ is finitely generated and we can again construct a surjection $f_1 : P_1 \rightarrow K$ where $P_1$ is a finitely generated projective complex. Set $K_1 = \ker{f_1}$ and note that $K_1$ must be bounded. Since each $X_n$ must be of type $FP_2$, it follows that $K_1$ must also be a finitely generated complex. Continuing is this way, using that $X_n$ is an $R$-module of type $FP_n$ for all $n$, we construct a projective resolution $\cdots \rightarrow P_2 \rightarrow P_1  \rightarrow  P_0 \rightarrow X \rightarrow 0 $ where each $P_i$ is a finitely
generated projective complex.
\end{proof}

\begin{corollary}\label{cor-thickness}
For any ring $R$, the class of complexes of type $FP_{\infty}$ is thick. Moreover,
\begin{enumerate}
\item $R$ is (left) Noetherian iff the finitely generated complexes coincide with the complexes of type $FP_{\infty}$.
\item $R$ is (left) coherent iff the finitely presented complexes coincide with the complexes of type $FP_{\infty}$.
\end{enumerate}

\end{corollary}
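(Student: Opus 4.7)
The plan is to reduce everything to module-level statements via Proposition~\ref{prop-FP-infinitiy complexes}, and then invoke Bieri's thickness result together with the standard characterizations of Noetherian and coherent rings.

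For thickness, I would take a short exact sequence $0 \to X \to Y \to Z \to 0$ in $\ch$ and assume that two of the three complexes are of type $FP_{\infty}$. Boundedness satisfies two-out-of-three in a short exact sequence of complexes (a straightforward degreewise check, since the zero positions propagate), so the remaining complex is also bounded. For each $n$, the slice $0 \to X_n \to Y_n \to Z_n \to 0$ is a short exact sequence of $R$-modules with two terms of type $FP_{\infty}$, so by Bieri's theorem the third term is of type $FP_{\infty}$ as well. Proposition~\ref{prop-FP-infinitiy complexes} then delivers the conclusion for the complex. For retracts, a retract of a bounded complex is bounded, and each degree is a retract of an $FP_{\infty}$ module, so thickness on the module side finishes the argument.

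For part (1), if $R$ is left Noetherian and $X$ is a finitely generated complex, then $X$ is bounded with each $X_n$ finitely generated, hence each $X_n$ is of type $FP_{\infty}$ over the Noetherian ring $R$; Proposition~\ref{prop-FP-infinitiy complexes} then gives that $X$ is of type $FP_{\infty}$. Every type $FP_{\infty}$ complex is finitely generated, as already noted in the paper, so the two classes coincide. For the converse I would use the sphere embedding: given a finitely generated $R$-module $M$, the complex $S^0(M)$ is bounded with finitely generated terms, hence finitely generated as a complex, hence of type $FP_{\infty}$ by hypothesis; then $M = S^0(M)_0$ is of type $FP_{\infty}$ as a module, and this forces $R$ to be Noetherian.

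Statement (2) follows the same template with ``finitely generated'' replaced throughout by ``finitely presented,'' using the module-level fact that $R$ is left coherent if and only if every finitely presented module is of type $FP_{\infty}$. A finitely presented complex is the cokernel of a map of finitely generated projective complexes, which unpacks to a bounded complex with each degree a finitely presented module, so the sphere argument applied to $S^0(M)$ again produces the converse. I do not anticipate any real obstacle: the substantive work is already contained in Proposition~\ref{prop-FP-infinitiy complexes} and Bieri's theorem, and the remaining content is essentially bookkeeping plus the observation that the sphere functor $M \mapsto S^0(M)$ is a faithful enough passage between modules and complexes to transport the Noetherian/coherent characterizations.
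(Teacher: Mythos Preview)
Your proposal is correct and follows exactly the route the paper takes: reduce everything to the module level via Proposition~\ref{prop-FP-infinitiy complexes} and invoke Bieri's thickness theorem together with the standard Noetherian/coherent characterizations. The paper's proof is a single sentence to this effect, so you have simply unpacked the bookkeeping (the two-out-of-three check for boundedness, the sphere argument for the converses) that the authors left implicit.
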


\begin{proof}
The analogous statements hold in the category $\rmod$ and so it is immediate from Proposition~\ref{prop-FP-infinitiy complexes} that they hold in $\ch$.
\end{proof}

\subsection{Absolutely clean chain complexes}

The definition below of an absolutely clean chain complex is entirely analogous to the definition of an absolutely clean $R$-module from~\cite{bravo-gillespie-hovey}.

\begin{definition}
We call a chain complex $A$ \textbf{absolutely clean} if $\Ext^1_{\ch}(X,A)=0$ for all chain complexes $X$ of type $FP_{\infty}$.
\end{definition}

Our goal now is to characterize the absolutely clean chain complexes. They will turn out to be the exact complexes $A$ for which each cycle $Z_nA$ is an absolutely clean $R$-module.

\begin{lemma}\label{lemma-abs clean lemma}
A chain complex $A$ is absolutely clean iff $\Ext^1_{\ch}(S^n(M),A)=0$ for all $R$-modules $M$ of type $FP_{\infty}$.
\end{lemma}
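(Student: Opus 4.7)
The plan is to leverage Proposition~\ref{prop-FP-infinitiy complexes}, which characterizes chain complexes of type $FP_{\infty}$ as the bounded complexes whose components are all $R$-modules of type $FP_{\infty}$.

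The forward direction is immediate: for any $R$-module $M$ of type $FP_{\infty}$, the sphere $S^n(M)$ is bounded with only nonzero component equal to $M$, so by the Proposition it is a chain complex of type $FP_{\infty}$; the vanishing then follows from the definition of absolutely clean.

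For the converse, assume $\Ext^1_{\ch}(S^n(M),A)=0$ for every $M$ of type $FP_{\infty}$ and every $n \in \Z$. Let $X$ be a chain complex of type $FP_{\infty}$. By the Proposition, $X$ is bounded with each $X_k$ an $R$-module of type $FP_{\infty}$, so the number $\ell(X)$ of indices $k$ with $X_k \neq 0$ is finite. I would induct on $\ell(X)$. The cases $\ell(X) \leq 1$ are trivial or are exactly the hypothesis (with $X \cong S^n(X_n)$). If $\ell(X) \geq 2$, let $n$ be the largest index with $X_n \neq 0$. Because $X_{n+1}=0$, the projection $p : X \to S^n(X_n)$ defined by $p_n = 1_{X_n}$ and $p_k = 0$ for $k \neq n$ is a chain map, and its kernel $X'$ is the subcomplex obtained from $X$ by replacing $X_n$ with $0$. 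This produces a short exact sequence
\[
0 \to X' \to X \xrightarrow{p} S^n(X_n) \to 0
\]
in $\ch$. Both $X'$ and $S^n(X_n)$ are bounded with each component of type $FP_{\infty}$, hence are themselves of type $FP_{\infty}$ by the Proposition, and each has strictly smaller $\ell$ than $X$. By the inductive hypothesis, $\Ext^1_{\ch}(X',A)=\Ext^1_{\ch}(S^n(X_n),A)=0$, and the long exact sequence obtained by applying $\Hom_{\ch}(-,A)$ then forces $\Ext^1_{\ch}(X,A)=0$.

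The only step requiring care is verifying that $X'$ really is a subcomplex of $X$, which uses crucially that $n$ is the top nonzero degree (otherwise $d_{n+1}$ would map nontrivially into the collapsed $X_n$). I do not foresee any genuine obstacle; the argument is a routine induction that packages the inductive structure of bounded complexes into the characterization given by Proposition~\ref{prop-FP-infinitiy complexes}.
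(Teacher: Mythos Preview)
Your proof is correct and is essentially the same argument as the paper's: both proceed by induction on the length of the bounded complex $X$, using the short exact sequence that expresses $X$ as an extension of a length-one sphere by a shorter complex. The paper merely sketches this (``a complex of length $n$ is an extension of a complex of length $1$ by a complex of length $n-1$'') and refers to~\cite[Proposition~3.17]{Ding-Chen-complex-models}, whereas you have written out the details explicitly.
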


\begin{proof}
The proof is straightforward and is just like the argument given in~\cite[Proposition~3.17]{Ding-Chen-complex-models}. We summarize the proof: Use that any complex $X$ of type $FP_{\infty}$ must be a bounded complex of finite length $n$. Proceed by induction. The base step ($n=1$) is given, and a complex of length $n$ is an extension of a complex of length 1 by a complex of length $n-1$.
\end{proof}

\begin{proposition}\label{prop-absolutely clean chain complexes}
A chain complex $A$ is absolutely clean if and only if $A$ is exact and each $Z_nA$ is an absolutely clean $R$-module.
\end{proposition}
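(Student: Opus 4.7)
The plan is to reduce, via Lemma~\ref{lemma-abs clean lemma}, to computing $\Ext^1_{\ch}(S^n(M),A)$ for $M$ an $R$-module of type $FP_{\infty}$, and to express this Ext group in terms of module-level data attached to $A$. Two calculations drive the argument: first, an unconditional identification $\Ext^1_{\ch}(S^n(R),A) \cong H_{n-1}(A)$; and second, an identification $\Ext^1_{\ch}(S^n(M),A) \cong \Ext^1_R(M,Z_n(A))$ valid whenever $A$ is exact.

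For the first calculation I would classify Yoneda extensions $0 \to A \to B \to S^n(R) \to 0$ directly. Outside degree $n$ one has $B_k = A_k$, and in degree $n$ the sequence $0 \to A_n \to B_n \to R \to 0$ splits because $R$ is projective. Fixing a splitting $B_n \cong A_n \oplus R$, the differential $d^B_n$ is completely determined by the element $a := d^B_n(0,1) \in A_{n-1}$; the condition $d^2=0$ forces $a \in Z_{n-1}(A)$, the extension splits precisely when $a \in B_{n-1}(A)$, and changing the chosen splitting of $B_n$ shifts $a$ by an element of $B_{n-1}(A)$. This gives the bijection with $H_{n-1}(A)$. For the second calculation I would pick $0 \to K \to P \to M \to 0$ with $P$ free, apply the exact functor $S^n$, and then apply $\Hom_{\ch}(-,A)$ to get a long exact sequence of Ext groups. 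Using the adjunction $\Hom_{\ch}(S^n(X),A) = \Hom_R(X,Z_n(A))$ together with the vanishing $\Ext^1_{\ch}(S^n(P),A) = \prod H_{n-1}(A) = 0$ (which follows from the first calculation and the exactness of $A$), comparing this long exact sequence with the one obtained by applying $\Hom_R(-,Z_n(A))$ to the module sequence identifies $\Ext^1_{\ch}(S^n(M),A)$ and $\Ext^1_R(M,Z_n(A))$ as coincident cokernels.

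With both calculations in hand, both directions follow quickly. For $(\Rightarrow)$, absolute cleanness of $A$ forces $\Ext^1_{\ch}(S^n(R),A) = 0$, since $S^n(R)$ is of type $FP_{\infty}$ by Proposition~\ref{prop-FP-infinitiy complexes}; the first formula then yields $H_{n-1}(A) = 0$ for every $n$, so $A$ is exact, after which the second formula forces $\Ext^1_R(M,Z_n(A)) = 0$ for every $M$ of type $FP_{\infty}$, proving each $Z_n(A)$ absolutely clean. For $(\Leftarrow)$, exactness of $A$ and absolute cleanness of the cycles make the right-hand side of the second formula vanish for every such $M$, and Lemma~\ref{lemma-abs clean lemma} concludes. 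The main technical hurdle I expect is the second formula, where the comparison of long exact sequences crucially consumes the first calculation; everything else is essentially bookkeeping.
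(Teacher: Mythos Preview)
Your proposal is correct and follows the same logical skeleton as the paper's proof: reduce via Lemma~\ref{lemma-abs clean lemma} to spheres, use $\Ext^1_{\ch}(S^n(R),A)=0$ to force exactness, and then use $\Ext^1_{\ch}(S^n(M),A)\cong \Ext^1_R(M,Z_nA)$ for exact $A$ to finish. The only difference is that the paper simply cites \cite[Lemmas~4.2 and~4.5]{gillespie-degreewise-model-strucs} for the two key identifications, whereas you supply direct arguments for them (the Yoneda classification for $S^n(R)$, and the cokernel comparison for general $M$); this makes your version more self-contained but not a different route.
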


\begin{proof}
Say $A$ is an absolutely clean complex. Then by Lemma~\ref{lemma-abs clean lemma} we see that $\Ext^1_{\ch}(S^n(M),A) = 0$ for all $M$ of type $FP_{\infty}$. Since $R$ is of type $FP_{\infty}$, we have $\Ext^1_{\ch}(S^n(R),A)=0$ for all $n$. It follows that $A$ must be an exact complex, (for example, see~\cite[Lemma~4.5]{gillespie-degreewise-model-strucs}). Now using~\cite[Lemma~4.2]{gillespie-degreewise-model-strucs} we have $0 = \Ext^1_{\ch}(S^n(M),A) \cong \Ext^1_R(M,Z_nA)$ for all $M$ of type $FP_{\infty}$. This means each $Z_nA$ is an absolutely clean $R$-module.

On the other hand, if $A$ is exact and each $Z_nA$ is an absolutely clean $R$-module, then we can reverse this argument and apply Lemma~\ref{lemma-abs clean lemma} to conclude $A$ is absolutely clean.
\end{proof}

\subsection{Properties of absolutely clean complexes}

Absolutely clean complexes possess the same nice properties as absolutely clean $R$-modules. Here we are following~\cite[Propositions~2.5 and~2.6]{bravo-gillespie-hovey}.

A short exact sequence $0 \rightarrow X \rightarrow Y \rightarrow Z \rightarrow 0$ of chain complexes is called \textbf{pure exact} if it remains exact after applying $\Hom_{\ch}(F,-)$ for any finitely presented complex $F$. We call it \textbf{clean exact} if it has the same property but only for all $F$ of type $FP_{\infty}$ rather than all the finitely presented $F$. A subcomplex $P$ of a chain complex $X$ is called \emph{pure} (resp. \emph{clean}) if $0 \rightarrow P \rightarrow X \rightarrow X/P \rightarrow 0$ is pure exact (reps. clean exact).

\begin{proposition}\label{prop-properties of absolutely clean complexes}
For any ring $R$ the following hold:
\begin{enumerate}
\item If $A$ is an absolutely clean chain complex, then $\Ext^n_{\ch}(X,A) = 0$ for all $n > 0$ and $X$ of type $FP_{\infty}$.
\item The class of absolutely clean chain complexes is closed under pure subcomplexes and pure quotients. In fact, they are closed under clean subcomplexes and clean quotients.
\item The class of absolutely clean chain complexes is coresolving; that is, it contains the injective chain complexes and is closed under extensions and cokernels of monomorphisms.
\item The class of absolutely clean chain complexes is closed under direct products, direct sums, retracts, direct limits, and transfinite extensions.
\end{enumerate}
\end{proposition}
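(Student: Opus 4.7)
The plan is to prove all four items by invoking the long exact sequence of $\Ext^*_{\ch}$ together with Corollary~\ref{cor-thickness}. I would begin with part~(1), using dimension shifting: fix a projective resolution $\cdots \to P_1 \to P_0 \to X \to 0$ by finitely generated projective complexes and set $K_i = \ker(P_{i-1} \to P_{i-2})$. Thickness of the type $FP_\infty$ class forces each $K_i$ to again be of type $FP_\infty$, so the LES of $\Ext^*_{\ch}(-,A)$ along the short exact sequences $0 \to K_i \to P_{i-1} \to K_{i-1} \to 0$ yields $\Ext^n_{\ch}(X,A) \cong \Ext^1_{\ch}(K_{n-2},A) = 0$ by the definition of absolutely clean.

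For part~(2), take a clean exact sequence $0 \to P \to X \to X/P \to 0$ with $X$ absolutely clean and any $F$ of type $FP_\infty$. By clean-exactness $\Hom_{\ch}(F,X) \to \Hom_{\ch}(F,X/P)$ is onto, so the connecting map to $\Ext^1_{\ch}(F,P)$ vanishes. The LES then embeds $\Ext^1_{\ch}(F,P)$ into $\Ext^1_{\ch}(F,X) = 0$, proving $P$ absolutely clean. Feeding this back into part~(1) yields $\Ext^2_{\ch}(F,P) = 0$, and the LES segment $\Ext^1_{\ch}(F,X) \to \Ext^1_{\ch}(F,X/P) \to \Ext^2_{\ch}(F,P)$ then gives that $X/P$ is absolutely clean. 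For part~(3), injective complexes trivially satisfy $\Ext^1_{\ch}(-,I)=0$; closure under extensions is middle exactness of the LES; for a cokernel of monomorphisms $0 \to A \to B \to C \to 0$ with $A, B$ absolutely clean, the segment $\Ext^1_{\ch}(F,B) \to \Ext^1_{\ch}(F,C) \to \Ext^2_{\ch}(F,A)$ has both outer terms vanishing (the right-hand term by part~(1)).

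For part~(4), closure under direct products follows because $\Ext^1_{\ch}(F,-)$ sends products to products, and closure under retracts is just functoriality. Direct sums and filtered colimits reduce to the compactness of finitely generated projective complexes in $\ch$: since $F$ of type $FP_\infty$ has a resolution by such compact objects, both $\Hom_{\ch}(F,-)$ and hence $\Ext^1_{\ch}(F,-)$ commute with $\bigoplus$ and with filtered colimits, so $\Ext^1_{\ch}(F,\bigoplus A_i) = \bigoplus \Ext^1_{\ch}(F,A_i) = 0$ and similarly for direct limits. Transfinite extensions follow from Eklof's lemma applied to the class $\{A : \Ext^1_{\ch}(F,A) = 0\}$. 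The main subtlety I anticipate is part~(2), where one must extract the vanishing of the connecting map from the definition of clean-exact and then loop back through part~(1) to finish off the quotient; everything else is routine once parts~(1) and~(2) and the compactness of finitely generated projective complexes are in hand.
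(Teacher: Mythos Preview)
Your arguments for parts~(1)--(3) are essentially identical to the paper's, and your treatment of products and retracts in~(4) also matches.

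For direct limits in~(4) you take a different route: rather than invoking Proposition~\ref{prop-absolutely clean chain complexes} to reduce to the known module-level statement (as the paper does), you argue directly that $\Ext^1_{\ch}(F,-)$ commutes with filtered colimits because $F$ admits a resolution by compact projectives. This is a perfectly good alternative and has the advantage of not relying on the structural characterization of absolutely clean complexes; the paper's approach, on the other hand, makes the parallel with the module case explicit.

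There is one genuine slip: your appeal to Eklof's lemma for transfinite extensions is misplaced. Eklof's lemma says that the \emph{left} orthogonal ${}^\perp\class{C}$ is closed under transfinite extensions, whereas the class of absolutely clean complexes is a \emph{right} orthogonal $\{A : \Ext^1_{\ch}(F,A)=0 \text{ for all } F \text{ of type } FP_\infty\}$, and right orthogonals are not in general closed under transfinite extensions. The conclusion you want does hold, but it follows instead from what you have already established: closure under extensions (your part~(3)) handles successor stages, and closure under direct limits handles limit stages, so a straightforward transfinite induction finishes the job. This is exactly how the paper argues it. Replace the Eklof reference with this induction and your proof is complete.
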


Recall that given a collection of chain complexes $\mathcal{D}$, we say that $X$ is a transfinite extension of objects in $\mathcal{D}$ if there is an ordinal $\lambda$ and a colimit-preserving functor $X: \lambda \to \ch$ with $X_0 \in \mathcal D$ such that each map $X_i \to X_{i+1}$ is a monomorphism whose cokernel  is in $\mathcal D$, and such that $\text{colim}_{i < \lambda} X_i  \cong X$.

\begin{proof}
Let $X$ be a chain complex of type $FP_{\infty}$ and take a resolution $P_*$ by finitely generated projective complexes. Set $X_0 = X$ and for each $i > 0$, let $X_i = \im (P_{i} \to P_{i-1})$, and thus from the following exact sequence
\[
 \cdots \to P_{i+2} \to P_{i+1} \to P_{i} \to X_i \to 0
\]
we see that each $X_i$ is of type $FP_{\infty}$. Hence by dimension shifting we get that for any absolutely clean complex $A$,
\[
 0 = \Ext^1_{\ch}(X_{n-1},A) \cong \Ext^n_{\ch}(X,A)
\]
This proves the first statement.

For the second statement, suppose that $A$ is an absolutely clean chain complex and that
\[
E \; : \; 0 \to A' \to A \to A'' \to 0
\]
is a pure exact sequence of chain complexes. So $\Hom_{\ch}(X,E)$ is exact for any finitely presented complex $X$. Then in particular $\Hom_{\ch}(X,E)$ is exact for any chain complex $X$ of type $FP_{\infty}$. Therefore $\Ext^1_{\ch}(X,A')$ is a subgroup of the zero group $\Ext^1_{\ch}(X,A)$, and thus $A'$ is an absolutely  clean chain complex. By the first part, we also have that $\Ext^2_{\ch}(X,A')=0$, and so $\Ext^1_{\ch}(X,A'')$ is the zero group too. Hence $A''$ is also an absolutely clean chain complex. Note that we only needed to assume that $E$ was a clean exact sequence for this argument to work.

Now suppose that
\[
0 \to A' \to A \to A'' \to 0
\]
is a short exact sequence with $A$ and $A'$ absolutely clean chain complexes. By applying $\Hom_{\ch}(X, -)$ to this sequence, where $X$ is of type $FP_{\infty}$, we get that $\Ext^1_{\ch}(X,A'')$ is trapped between the two zero groups $\Ext^1_{\ch}(X,A)$ and $\Ext^2_{\ch}(X,A')$, and so it is also zero. This gives us that $A''$ is an absolutely clean chain complex. Similarly, if $A'$ and $A''$ are absolutely clean chain complex, then by the same token $\Ext^1_{\ch}(X,A)=0$, whenever $X$ is of type $FP_{\infty}$. Injective complexes are easily seen to be absolutely clean, so we have proved the third statement.

Finally, for the fourth statement, observe that absolutely clean chain complexes are clearly closed under products and retracts due to standard properties of $\Ext^1$. Notice that closure under direct sums is a special case of closure under direct limits, since any direct sum is the direct limit of its finite partial sums. Also, since we already have that the absolutely clean complexes are closed under extensions, the closure under transfinite extensions will also follow from knowing closure under direct limits. Thus it is only left to show that the absolutely clean complexes are closed under direct limits. But this follows immediately from the characterization of absolutely clean complexes given in Proposition~\ref{prop-absolutely clean chain complexes} along with the corresponding fact for $R$-modules from~\cite[Proposition~2.5]{bravo-gillespie-hovey}. In other words, it follows from the fact that direct limits are exact.
\end{proof}

The next Proposition will require the following lemma whose proof can be found in~\cite[Lemma~5.2.1]{garcia-rozas} or~\cite[Lemma~4.6]{gillespie}. For a chain complex $X$, we define its cardinality to be
$|\coprod_{n \in \Z} X_n|$.

\begin{lemma}\label{lemma-pure cardinality}
Let $\kappa$ be some regular cardinal with $\kappa > |R|$.
Say $X \in \ch$ and $S \subseteq X$ has $|S| \leq \kappa$. Then there exists a pure $P \subseteq X$ with
$S \subseteq P$ and $|P| \leq \kappa$.
\end{lemma}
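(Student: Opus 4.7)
The plan is to construct $P$ as the union of a countable ascending chain of subcomplexes of $X$, iteratively adjoining witnesses for every obstruction to purity. This is the standard L\"owenheim--Skolem-style argument that works in any locally finitely presentable Grothendieck abelian category.

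Since $\kappa > |R|$ and $\kappa$ is regular (so $\kappa \geq \aleph_0$), replacing $S$ by the subcomplex of $X$ it generates only enlarges its cardinality by a factor of $|R|\cdot\aleph_0 \leq \kappa$; so I may assume $S$ is already a subcomplex. I would then inductively build $S = P_0 \subseteq P_1 \subseteq P_2 \subseteq \cdots$ of subcomplexes of $X$, each with $|P_n| \leq \kappa$, as follows. The key input is the equivalence (a standard diagram chase, using a presentation of $F$) between purity of $P \hookrightarrow X$ and the following \textbf{extension property}: for every short exact sequence $0 \to G \to H \to F \to 0$ of chain complexes with $F$ finitely presented and $H$ finitely generated projective, and every chain map $\alpha : G \to P$ that extends to some $\beta : H \to X$, there exists an extension $H \to P$ as well. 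Given $P_n$, I fix a set of representatives of the pairs $(G \hookrightarrow H)$ (at most $|R|^{<\aleph_0}\cdot\aleph_0 \leq \kappa$ up to isomorphism), and for each representative and each $\alpha : G \to P_n$ that extends to some chain map $\beta : H \to X$, I choose one such $\beta$ and adjoin its image to $P_{n+1}$. Since $\Hom(G, P_n)$ has cardinality $\leq \kappa$ and each chosen $\beta$ contributes only finitely many new generators, regularity of $\kappa$ yields $|P_{n+1}| \leq \kappa$.

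Finally, I would set $P = \bigcup_{n<\omega} P_n$, which has $|P| \leq \kappa$ by regularity. To verify purity, fix a finitely presented $F$ and any chain map $\phi : F \to X/P$, and choose a presentation $0 \to G \to H \to F \to 0$ with $H$ finitely generated projective. Since $H$ is projective in $\ch$, the composition $H \to F \xrightarrow{\phi} X/P$ lifts to some $\beta_0 : H \to X$, and the restriction $\alpha := \beta_0|_G$ lands in $P$. As $G$ is finitely generated, $\alpha$ factors through some $P_n$, and by construction there exists an extension $\delta : H \to P_{n+1}$ of $\alpha$. Then $\beta_0 - \delta : H \to X$ vanishes on $G$, so descends to a chain map $F \to X$ whose composition with $\pi : X \to X/P$ is $\phi$, as desired.

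The main technical point is verifying the equivalence of purity with the extension property stated above; once that is in hand, the rest reduces to routine cardinality bookkeeping, with the only subtlety being the initial observation that $\kappa \geq \aleph_0$ and that $|R|^{<\aleph_0}\cdot\aleph_0 \leq \kappa$ suffices to enumerate the presentations at each stage.
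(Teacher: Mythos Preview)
Your argument is correct and is precisely the standard L\"owenheim--Skolem construction. The paper does not supply its own proof of this lemma but instead refers to \cite[Lemma~5.2.1]{garcia-rozas} and \cite[Lemma~4.6]{gillespie}, where essentially the same countable iterative construction is carried out; the cited proofs phrase purity via the tensor/linear-equation characterization rather than your extension property, but the two formulations are equivalent and the inductive structure is identical. One small point worth making explicit is that the claim ``$G$ is finitely generated'', which you use to factor $\alpha$ through some $P_n$, follows from Schanuel's lemma applied degreewise; also, regularity of $\kappa$ is invoked only to guarantee $\kappa \geq \aleph_0$, as the remaining cardinal arithmetic needs nothing beyond $\kappa \cdot \kappa = \kappa$.
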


\begin{remark}\label{remark-homcomplex}
We note that~\cite[Lemma~5.2.1]{garcia-rozas} and~\cite[Lemma~4.6]{gillespie} give several other characterizations of pure exact sequences of complexes, but none of them are stated exactly the same as our definition above. However, they are equivalent. In particular, one of their characterizations of purity is that the altered Hom-complex functor $\overline{\homcomplex}(F,-)$ remains an exact sequence (of complexes) for any finitely presented complex $F$. However, for chain complexes $X$,$Y$, the definition of $\overline{\homcomplex}(X,Y)$ turns out to just be $\Hom_{\ch}(X,\Sigma^{-n}Y)$ in degree $n$. So indeed, $\overline{\homcomplex}(F,-)$ preserves short exact sequences if and only if $\Hom_{\ch}(F,-)$ preserves short exact sequences.
\end{remark}

\begin{proposition}\label{prop-transfinite}
Suppose $\mathcal{A}$ is a class of chain complexes that is closed under taking pure subcomplexes and quotients by pure subcomplexes. Then there is a cardinal $\kappa$ such that every chain complex in $\mathcal{A}$ is a transfinite extension of complexes in $\mathcal{A}$ with cardinality bounded by $\kappa$, meaning $\leq \kappa$.
\end{proposition}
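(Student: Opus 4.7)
My plan is to build, for each $X \in \mathcal{A}$, a continuous transfinite filtration $\{X_\alpha\}_{\alpha \leq \mu}$ of $X$ by pure subcomplexes of $X$ in which the successive quotients $X_{\alpha+1}/X_\alpha$ all have cardinality bounded by a single cardinal $\kappa$ depending only on $R$. This is the usual Enochs--Hill style single-step argument. Explicitly, fix any regular cardinal $\kappa > |R|$ so that Lemma~\ref{lemma-pure cardinality} is available, and, given $X \in \mathcal{A}$, well-order the underlying set of $X$ as $\{x_\beta : \beta < \mu\}$. Define $X_\alpha$ by transfinite recursion: set $X_0 = 0$; at limit stages take $X_\alpha = \bigcup_{\beta < \alpha} X_\beta$; at a successor $\alpha + 1$, if $x_\alpha \in X_\alpha$ let $X_{\alpha+1} = X_\alpha$, and otherwise apply Lemma~\ref{lemma-pure cardinality} to the complex $X/X_\alpha$ and the singleton $\{x_\alpha + X_\alpha\}$ to obtain a pure subcomplex $P \subseteq X/X_\alpha$ with $x_\alpha + X_\alpha \in P$ and $|P| \leq \kappa$, and let $X_{\alpha+1}$ be the preimage of $P$ in $X$. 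Then $X = X_\mu$ and $|X_{\alpha+1}/X_\alpha| = |P| \leq \kappa$ by construction.

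The key inductive invariant to maintain is that every $X_\alpha$ is pure in $X$. At successor stages this reduces to the standard extension principle for purity: if $A \subseteq B \subseteq C$ with $A$ pure in $C$ and $B/A$ pure in $C/A$, then $B$ is pure in $C$. For our definition of purity this is verified by a diagram chase with $\Hom_{\ch}(F,-)$ for a finitely presented complex $F$: given $\phi\colon F \to C/B$, first lift $\phi$ along $C/A \twoheadrightarrow C/B \cong (C/A)/(B/A)$ using purity of $B/A$ in $C/A$, and then lift the resulting map $F \to C/A$ to $C$ using purity of $A$ in $C$. At limit stages, $X_\alpha$ is pure in $X$ because $X/X_\alpha = \colim_{\beta < \alpha} X/X_\beta$, and $\Hom_{\ch}(F,-)$ commutes with filtered colimits for finitely presented $F$, so any $\phi\colon F \to X/X_\alpha$ factors through some $X/X_\beta$ and lifts to $X$ by the inductive hypothesis.

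Once the invariant is established, the statement follows quickly. Since $\mathcal{A}$ is closed under pure subcomplexes and $X \in \mathcal{A}$, each $X_\alpha$ lies in $\mathcal{A}$. Moreover, $X_\alpha$ being pure in $X$ forces $X_\alpha$ to be pure in the intermediate complex $X_{\alpha+1}$ (any finite system of linear equations with constants in $X_\alpha$ solvable in $X_{\alpha+1}$ is solvable in $X$, hence in $X_\alpha$; equivalently, via the $\Hom_{\ch}(F,-)$ formulation, a lift $F \to X$ of a map $F \to X_{\alpha+1}/X_\alpha \hookrightarrow X/X_\alpha$ must actually land in $X_{\alpha+1}$). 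Therefore $X_{\alpha+1}/X_\alpha$ is a pure quotient of $X_{\alpha+1} \in \mathcal{A}$, so it lies in $\mathcal{A}$, and it has cardinality $\leq \kappa$. Thus $\{X_\alpha\}_{\alpha \leq \mu}$ exhibits $X$ as a transfinite extension of complexes in $\mathcal{A}$ of cardinality at most $\kappa$.

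The main obstacle is purely bookkeeping with purity: cleanly proving the two closure properties used above, namely that purity is preserved by filtered unions and by the ``$A$ pure in $C$ plus $B/A$ pure in $C/A$'' extension. Everything else is routine transfinite induction, and the cardinal bound $\kappa$ is essentially handed to us by Lemma~\ref{lemma-pure cardinality}.
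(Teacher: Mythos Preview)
Your proof is correct and follows essentially the same approach as the paper's: build a continuous chain of pure subcomplexes of $X$ by repeatedly applying Lemma~\ref{lemma-pure cardinality} to the current quotient $X/X_\alpha$, using the purity extension principle (``$A$ pure in $C$ and $B/A$ pure in $C/A$ implies $B$ pure in $C$'') at successor stages and closure of purity under directed unions at limit stages. The only cosmetic differences are that you start at $X_0=0$ and well-order the underlying set to guarantee exhaustion, whereas the paper starts with a nonzero $A_0$ and terminates by a cardinality argument; and you conclude $X_{\alpha+1}/X_\alpha\in\mathcal{A}$ as a pure quotient of $X_{\alpha+1}$, whereas the paper observes it directly as a pure subcomplex of $X/X_\alpha\in\mathcal{A}$.
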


\begin{proof}
As in Lemma~\ref{lemma-pure cardinality}, we let $\kappa$ be some regular cardinal with $\kappa > |R|$. Let $A \in \class{A}$. If $|A| \leq \kappa$ there is nothing to prove. So assume $|A| > \kappa$. We will use transfinite induction to find a strictly increasing continuous chain $A_0 \subset A_1 \subset A_2 \subset \cdots \subset A_{\alpha} \subset \cdots$
of subcomplexes of $A$ with each $A_{\alpha}, A_{\alpha+1}/A_{\alpha} \in \class{A}$ and with $|A_0| , |A_{\alpha+1}/A_{\alpha}| \leq \kappa$. We start by applying Lemma~\ref{lemma-pure cardinality} to find a pure subcomplex $A_0 \subset A$ with $|A_0| \leq \kappa$. Then $A_0$ and $A/A_0$ are each complexes in $\class{A}$ by assumption. So we again apply Lemma~\ref{lemma-pure cardinality} to $A/A_0$ to obtain a pure subcomplex $A_1/A_0 \subset A/A_0$ with $|A_1/A_0| \leq \kappa$. So far we have $A_0 \subset A_1 \subset A$ and with $A_0,A_1/A_0$ back in $\class{A}$ and with their cardinalities bounded by $\kappa$.  

We now pause to point out the important fact that $A_1 \subset A$ is also a pure subcomplex. Indeed, given a finitely presented complex $F$, we need to argue that $\Hom_{\ch}(F,A) \rightarrow \Hom_{\ch}(F,A/A_1)$ is an epimorphism. But after identifying $(A/A_0)/(A_1/A_0) \cong A/A_1$, this map is just the composite $$\Hom_{\ch}(F,A) \xrightarrow{} \Hom_{\ch}(F,A/A_1) \xrightarrow{} \Hom_{\ch}(F,(A/A_0)/(A_1/A_0)),$$ and each of these are epimorphisms because $A_0 \subset A$ is pure and $A_1/A_0 \subset A/A_0$ is pure.

Back to the increasing chain $A_0 \subset A_1 \subset A$, we also note that $A/A_1$ is back in $\class{A}$ since $(A/A_0)/(A_1/A_0) \cong A/A_1$ is a pure quotient.
So we may repeat the above procedure to construct a strictly increasing chain $A_0 \subset A_1 \subset A_2 \subset \cdots$ where each $A_n$ is a pure subcomplex of $A$ and each $A_{n+1}/A_n \in \class{A}$ has cardinality bounded by $\kappa$. We set $A_{\omega} = \cup_{n<\omega} A_n$. Then $A_{\omega}$ is also a pure subcomplex since pure subcomplexes are closed under direct unions by~\cite[pp.~3384]{gillespie}. So $A_{\omega}$ and $A/A_{\omega}$ are also each in $\class{A}$ and we may continue to building the continuous chain $$A_0 \subset A_1 \subset A_2 \subset \cdots \subset A_{\omega} \subset A_{\omega +1} \cdots$$ Continuing with transfinite induction, and setting $A_{\gamma} = \cup_{\alpha < \gamma} A_{\alpha}$ whenever $\gamma$ is a limit ordinal, this process eventually must terminate and we end up with $A$ expressed as a union of a continuous chain with all the desired properties.
\end{proof}

\begin{corollary}\label{cor-absolutely clean complexes are transfinite extensions}
There exists a cardinal $\kappa$ such that every absolutely clean chain complex is a transfinite extension of absolutely clean complexes with cardinality bounded by $\kappa$. In particular, there is a set $\class{S}$ of absolutely clean complexes for which every absolutely clean complex is a transfinite extension of ones in $\class{S}$.

\end{corollary}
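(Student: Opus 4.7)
The plan is to apply Proposition~\ref{prop-transfinite} directly to the class $\class{A}$ of absolutely clean chain complexes. To do so, I must verify the hypothesis that $\class{A}$ is closed under pure subcomplexes and under quotients by pure subcomplexes; but this is exactly part (2) of Proposition~\ref{prop-properties of absolutely clean complexes} (and in fact is there established in the stronger ``clean exact'' form, which is more than enough). Thus Proposition~\ref{prop-transfinite} yields a cardinal $\kappa$ such that every absolutely clean chain complex is a transfinite extension of absolutely clean complexes of cardinality $\leq \kappa$, which proves the first assertion.

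For the second, ``in particular,'' assertion I would note that once the cardinal bound $\kappa$ is fixed, there are only a set's worth of isomorphism classes of chain complexes of $R$-modules of cardinality $\leq \kappa$. Indeed, any underlying graded set of size $\leq \kappa$ admits only a set of possible $R$-module structures (bounded in terms of $|R|$ and $\kappa$) and only a set of possible differentials. Therefore, choosing one representative from each isomorphism class of absolutely clean complex of cardinality $\leq \kappa$ produces a \emph{set} $\class{S}$ of absolutely clean complexes, and by the first part every absolutely clean complex is a transfinite extension of complexes in $\class{S}$.

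The only substantive content is the verification of the hypotheses of Proposition~\ref{prop-transfinite}; this is immediate from earlier results, so there is no real obstacle. The potentially delicate point---namely the extraction of pure subcomplexes of bounded cardinality---has already been handled inside Proposition~\ref{prop-transfinite} via Lemma~\ref{lemma-pure cardinality}, so the corollary is a direct application.
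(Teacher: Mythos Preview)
Your proof is correct and matches the paper's approach exactly: the paper's proof simply reads ``Immediate from the previous propositions,'' referring precisely to Proposition~\ref{prop-properties of absolutely clean complexes}(2) and Proposition~\ref{prop-transfinite} as you do. Your additional justification for why the $\kappa$-bounded absolutely clean complexes form a set is a reasonable unpacking of what the paper leaves implicit.
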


\begin{proof}
Immediate from the previous  propositions. 
\end{proof}

\section{Gorenstein AC-injective chain complexes}\label{sec-Gorenstein AC-inj}

Again, following~\cite{bravo-gillespie-hovey}, we now introduce the Gorenstein AC-injective chain complexes. The main goal here is to characterize these complexes and to show that they are the fibrant objects of an injective model structure on the category $\ch$.

\begin{definition}\label{def-Gorenstein AC-injective complex}
We call a chain complex $X$ \textbf{Gorenstein AC-injective} if there exists an exact complex of injective complexes $$\cdots \rightarrow I_1 \rightarrow I_0 \rightarrow I^0 \rightarrow I^1 \rightarrow \cdots$$ with $X = \ker{(I^0 \rightarrow I^1)}$ and which remains exact after applying $\Hom_{\ch}(A,-)$ for any absolutely clean chain complex $A$.
\end{definition}

Note that it is the abelian group bifunctor $\Hom_{\ch}$ and not the \emph{complex} of abelian groups bifunctor $\homcomplex$ (see Section~\ref{sec-intro}) appearing in the above definition. However, it is equivalent to replace $\Hom_{\ch}(A,-)$ in the definition with the graded Hom-complex $\overline{\homcomplex}(A,-)$. See Remark~\ref{remark-homcomplex}. On the other hand, 
it is $\homcomplex$ that appears in the following characterization inspired by~\cite[Theorem~3.20]{Ding-Chen-complex-models}.

\begin{theorem}\label{them-characterization of Gorenstein AC-injective complexes}
A chain complex $X$ is Gorenstein AC-injective if and only if each $X_n$ is a Gorenstein AC-injective $R$-module and $\homcomplex(A,X)$ is exact for any absolutely clean chain complex $A$.
Equivalently, each $X_n$ is Gorenstein AC-injective and any chain map $f : A \rightarrow X$ is null homotopic whenever $A$ is an absolutely clean complex.
\end{theorem}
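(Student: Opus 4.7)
The plan is to handle the two equivalent characterizations together, noting first that $\homcomplex(A,X)$ is exact if and only if every chain map $A\to X$ is null homotopic; this follows immediately from the identity $H_n[\homcomplex(A,X)] = \ch(A,\Sigma^{-n}X)/\sim$ stated in the introduction, combined with the observation that suspensions of absolutely clean complexes are absolutely clean (immediate from Proposition~\ref{prop-absolutely clean chain complexes}).

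For the forward direction, suppose $X$ sits in a complete resolution $\cdots\to I_1\to I_0\to I^0\to I^1\to\cdots$. To show each $X_n$ is Gorenstein AC-injective as a module, I would evaluate the resolution at degree $n$, producing an exact sequence of injective $R$-modules with $X_n$ as the required kernel; testing against any absolutely clean module $M$ via the adjunction $\Hom_{\ch}(D^n(M),-)=\Hom_R(M,(-)_n)$ yields the needed exactness, because $D^n(M)$ is absolutely clean by Proposition~\ref{prop-absolutely clean chain complexes} (it is exact with cycles $0$ and $M$). To see every chain map $f\colon A\to X$ with $A$ absolutely clean is null homotopic, apply $\Hom_{\ch}(A,-)$ to the complete resolution (which remains exact by hypothesis) and lift $f$ through the surjection $p\colon I_0\twoheadrightarrow X$ to obtain $\tilde f\colon A\to I_0$; since $I_0$ is an injective chain complex it is contractible, so $\tilde f$ is null homotopic via some $\tilde h\colon A\to I_0$, and $p\tilde h$ is the desired null homotopy of $f$.

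For the backward direction I would construct the two halves of the complete resolution separately and splice at $X$. The right half is an ordinary injective coresolution $0\to X\to I^0\to I^1\to\cdots$ in $\ch$. The left half is built one syzygy at a time: for each syzygy $X_i$, choose injective $R$-modules $J_n$ surjecting onto $(X_i)_n$ for each $n$, and form $I_i=\prod_n D^n(J_n)$, which is injective as a product of the injective complexes $D^n(J_n)$ and admits a degreewise surjection onto $X_i$ because in degree $k$ the summand $D^k(J_k)$ already covers $(X_i)_k$. The inductive engine is a closure lemma: if $Y$ satisfies the hypotheses of the theorem, and either $0\to Y'\to J\to Y\to 0$ or $0\to Y\to J\to Y''\to 0$ is exact with $J$ an injective chain complex, then $Y'$ or $Y''$ respectively also satisfies the hypotheses. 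Degreewise, this requires that Gorenstein AC-injective modules are closed under cokernels of monomorphisms and under kernels of epimorphisms, which is the hereditariness of the cotorsion pair $({}^\perp\mathrm{GAI},\mathrm{GAI})$ from~\cite{bravo-gillespie-hovey}. For the $\homcomplex$-part, each component $A_k$ of an absolutely clean complex is itself an absolutely clean $R$-module (as an extension of its cycles, using Proposition~\ref{prop-absolutely clean chain complexes}), so $\Ext^1_R(A_k,Y_{k+n})=0$, which makes $\homcomplex(A,-)$ send the short exact sequence of complexes to a short exact sequence of complexes of abelian groups; since $\homcomplex(A,J)$ is exact ($J$ is contractible) and $\homcomplex(A,Y)$ is exact by hypothesis, a two-out-of-three argument in the long exact sequence in homology shows $\homcomplex(A,Y')$ (resp.\ $\homcomplex(A,Y'')$) is exact. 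Finally, $\Hom_{\ch}(A,-)$-exactness of the spliced bi-infinite resolution follows by diagram chase from the $\homcomplex(A,-)$-short exactness established on each short segment.

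The main obstacle is the closure lemma, specifically the degreewise fact that Gorenstein AC-injective modules are closed under kernels of epimorphisms between themselves; this has to be imported from the module-level theory in~\cite{bravo-gillespie-hovey}. A secondary technicality is producing a surjection onto an arbitrary chain complex $X_i$ from an injective chain complex, which is what motivates the construction $I_i=\prod_n D^n(J_n)$.
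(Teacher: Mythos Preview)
Your forward direction is fine and actually cleaner than the paper's in one spot: rather than computing $H_n[\homcomplex(A,X)]$ via the $\Ext^1_{dw}$ identification, you simply lift $f\colon A\to X$ through the surjection $I_0\twoheadrightarrow X$ (which you may do since $\Hom_{\ch}(A,-)$ stays exact) and use contractibility of the injective complex $I_0$. That works.

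The backward direction, however, has a genuine gap in the construction of the \emph{left} half of the complete resolution. You surject from an injective complex $I_i=\prod_n D^n(J_n)$ built from \emph{arbitrary} injective modules $J_n\twoheadrightarrow (X_i)_n$, and then invoke a ``closure lemma'' asserting that Gorenstein AC-injective modules are closed under kernels of epimorphisms. You attribute this to hereditariness of $({}^{\perp}\mathcal{GI},\mathcal{GI})$, but hereditariness only gives closure of the \emph{right} class under cokernels of monomorphisms (and of the left class under kernels of epimorphisms), not what you need. Worse, the claim itself is false: over $\mathbb{Z}$ the Gorenstein AC-injective modules coincide with the injective (divisible) abelian groups, and $0\to\mathbb{Z}\to\mathbb{Q}\to\mathbb{Q}/\mathbb{Z}\to 0$ exhibits a kernel of an epimorphism between injectives that is not injective. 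So with arbitrary $J_n$ the degreewise kernel need not be Gorenstein AC-injective, and your induction breaks.

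The paper circumvents this precisely by \emph{not} taking arbitrary $J_n$: since each $X_n$ is Gorenstein AC-injective, one chooses $0\to Y_n\to J_n\to X_n\to 0$ coming from the complete injective resolution of $X_n$, so that $Y_n$ is Gorenstein AC-injective \emph{by construction}. One then composes $\bigoplus_n D^n(J_n)\to\bigoplus_n D^n(X_n)\to X$ and checks via the snake lemma that the kernel $K_0$ is an extension of $\bigoplus_n D^n(Y_n)$ by $\Sigma^{-1}X$, hence degreewise Gorenstein AC-injective because that class is closed under extensions. This is the missing idea; once you make this adjustment, the rest of your outline (the $\homcomplex(A,-)$ two-out-of-three argument, and the passage from $\homcomplex$-exactness to $\Hom_{\ch}$-exactness via $\Ext^1_{\ch}=\Ext^1_{dw}$) goes through.
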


\begin{proof}
\noindent ($\Rightarrow$) Let $X$ be a Gorenstein AC-injective complex.  Then there is an exact complex of injective complexes $$\cdots \rightarrow I_1 \rightarrow I_0 \rightarrow I^0 \rightarrow I^1 \rightarrow \cdots$$ with $X = \ker{(I^0 \rightarrow I^1)}$ which remains exact after applying $\Hom_{\ch}(A,-)$ for any absolutely clean chain complex $A$. We first wish to show that each $X_n$ is a Gorenstein AC-injective $R$-module. Of course we have the exact complex of injective $R$-modules $$\cdots \rightarrow (I_1)_n \rightarrow (I_0)_n \rightarrow (I^0)_n \rightarrow (I^1)_n \rightarrow \cdots$$ and it does have $X_n = \ker{((I^0)_n \rightarrow (I^1)_n)}$. So it is left to show that this remains exact after applying $\Hom_R(A,-)$ for any absolutely clean $R$-module $A$. But given any such $A$, we get that $D^n(A)$ is absolutely clean from Proposition~\ref{prop-absolutely clean chain complexes}. Using the standard adjunction $\Hom_{\ch}(D^n(A),Y) \cong \Hom_R(A,Y_n)$, we see that the complex of abelian groups $$\cdots \rightarrow \Hom_R(A,(I_1)_n) \rightarrow \Hom_R(A,(I_0)_n) \rightarrow \Hom_R(A,(I^0)_n) \rightarrow \cdots$$ is isomorphic to the one obtained by applying $\Hom_{\ch}(D^n(A),-)$ to the original injective resolution of $X$. Since the latter complex is exact, we conclude $X_n$ is Gorenstein AC-injective.

Next we wish to show that for any absolutely clean chain complex $A$, the complex $\homcomplex(A,X)$ is exact. Since $X$ is Gorenstein AC-injective it follows from the definition that $\Ext^n_{\ch}(A,X) = 0$ for all $n\geq1$. In particular, $\Ext^1_{\ch}(A,X) = 0$ for all absolutely clean complexes $A$. This implies that the subgroup of all degreewise split extensions $\Ext^1_{dw}(A,X) = 0$ for all absolutely clean complexes $A$. So given any absolutely clean $A$, we have $H_n[\homcomplex(A,X)] \cong \Ext^1_{dw}(A,\Sigma^{-n-1}X)$ by~\cite[Lemma~2.1]{gillespie}. But it is easy to see $\Ext^1_{dw}(A,\Sigma^{-n-1}X) \cong \Ext^1_{dw}(\Sigma^{n+1}A,X)$. So for all $n$ we have $H_n[\homcomplex(A,Y)] \cong \Ext^1_{dw}(\Sigma^{n+1}A,X) = 0$ since of course $\Sigma^{n+1}A$ is also absolutely clean.

\noindent ($\Leftarrow$) Now suppose each $X_n$ is a Gorenstein AC-injective $R$-module and $\homcomplex(A,X)$ is exact for any absolutely clean chain complex $A$. We wish to construct a complete injective resolution of $X$ satisfying Definition~\ref{def-Gorenstein AC-injective complex}. We start by using that $\ch$ has enough injectives, and write a short exact sequence $0 \rightarrow X \rightarrow I^0 \rightarrow C \rightarrow 0$ where $I^0$ is an injective complex. Since the class of Gorenstein AC-injective $R$-modules is coresolving (by~\cite[Lemma~5.6]{bravo-gillespie-hovey}) we see that each $C_n$ is also Gorenstein AC-injective. We claim that $C$ also satisfies that $\homcomplex(A,C)$ is exact for all absolutely clean complexes $A$. Indeed for any choice of integers $n,k$ and an absolutely clean complex $A$, we have $\Ext^1_R(A_k,X_{k+n}) = 0$ since $A_k$ is an absolutely clean $R$-module and $X_{k+n}$ is Gorenstein AC-injective. Therefore we get a short exact sequence for all $n,k$:
$$0 \rightarrow \Hom_R(A_k,X_{k+n}) \rightarrow \Hom_R(A_k,(I^0)_{k+n}) \rightarrow \Hom_R(A_k,C_{k+n}) \rightarrow 0.$$
Since products of short exact sequences of abelian groups are again exact, we get the short exact sequence
$$0 \rightarrow \prod_{k \in \Z}\Hom_R(A_k,X_{k+n}) \rightarrow \prod_{k \in \Z}\Hom_R(A_k,(I^0)_{k+n}) \rightarrow \prod_{k \in \Z}\Hom_R(A_k,C_{k+n}) \rightarrow 0.$$
But this is degree $n$ of $0 \rightarrow \homcomplex(A,X)  \rightarrow \homcomplex(A,I^0)  \rightarrow  \homcomplex(A,C) \rightarrow 0$, and so this last sequence of complexes is short exact. Since $\homcomplex(A,X)$  and $\homcomplex(A,I^0)$ are each exact it follows that $\homcomplex(A,C)$ is also exact as claimed. Since $C$ has the same properties as $X$ we may inductively obtain an injective coresolution $$0 \xrightarrow{} X \xrightarrow{\eta} I^0 \xrightarrow{d^0} I^1 \xrightarrow{d^1} I^2 \xrightarrow{d^2} \cdots$$ where each $K^i = \ker{d^i} \ (i \geq 0)$ is degreewise Gorenstein AC-injective and which satisfies that $\homcomplex(A,K^i)$ is exact for any absolutely clean complex $A$. This coresolution must remain exact after applying $\Hom_{\ch}(A,-)$ for any absolutely clean $A$ because $\Ext^1_{\ch}(A,K^i) = \Ext^1_{dw}(A,K^i) \cong H_{-1}[\homcomplex(A,K^i)] = 0$, where again we have used~\cite[Lemma~2.1]{gillespie}.

It is left then to extend $0 \xrightarrow{} X \xrightarrow{\eta} I^0 \xrightarrow{d^0} I^1 \xrightarrow{d^1} I^2 \xrightarrow{d^2} \cdots$ to the left to obtain a complete resolution satisfying Definition~\ref{def-Gorenstein AC-injective complex}. First note that there is an obvious (degreewise split) short exact sequence $$0 \rightarrow \Sigma^{-1}X \xrightarrow{(1,-d)} \bigoplus_{n \in \Z}D^n(X_n) \xrightarrow{d+1} X \rightarrow 0.$$
Now each $X_n$ is Gorenstein AC-injective. So we certainly can find a short exact sequence
$0 \rightarrow Y_n \xrightarrow{\alpha_n} J_n \xrightarrow{\beta_n} X_n \rightarrow 0$ where $J_n$ is injective and $Y_n$ is also Gorenstein AC-injective. This gives us another short exact sequence 
\[
0 \rightarrow \bigoplus_{n \in \Z}D^n(Y_n) \xrightarrow{\bigoplus_{n \in \Z}D^n(\alpha_n)} \bigoplus_{n \in \Z}D^n(J_n) \xrightarrow{\bigoplus_{n \in \Z}D^n(\beta_n)} \bigoplus_{n \in \Z}D^n(X_n) \rightarrow 0.
\]
Notice that $\bigoplus_{n \in \Z}D^n(J_n) = \prod_{n \in \Z}D^n(J_n)$ is an injective complex and we will denote it by $I_0$. Furthermore, let $\epsilon : I_0 \rightarrow X$ be the composite 
\[
\bigoplus_{n \in \Z}D^n(J_n) \xrightarrow{\bigoplus_{n \in \Z}D^n(\beta_n)} \bigoplus_{n \in \Z}D^n(X_n) \xrightarrow{d+1} X.
\]
Then $\epsilon$ is an epimorphism since it is the composite of two epimorphisms. Moreover, setting $K_0 = \ker{\epsilon}$, it follows from the snake lemma that $K_0$ sits in the short exact sequence $0 \rightarrow \bigoplus_{n \in \Z}D^n(Y_n) \xrightarrow{} K_0 \xrightarrow{} \Sigma^{-1} X \rightarrow 0$. In particular, $K_0$ is an extension of $\bigoplus_{n \in \Z}D^n(Y_n)$ and $ \Sigma^{-1} X$, and so $K_0$ must be Gorenstein AC-injective in each degree since both of $\bigoplus_{n \in \Z}D^n(Y_n)$ and $ \Sigma^{-1} X$ are such. Because of this, if $A$ is any absolutely clean complex, applying $\homcomplex(A,-)$ to the  short exact sequence
$0 \rightarrow K_0 \xrightarrow{} I_0 \xrightarrow{} X \rightarrow 0$ will yield the short exact  $0 \rightarrow \homcomplex(A,K_0)  \rightarrow \homcomplex(A,I_0)  \rightarrow  \homcomplex(A,X) \rightarrow 0$ of chain complexes. And also $\homcomplex(A,K_0)$ must be exact since  $\homcomplex(A,I_0)$ and $\homcomplex(A,X)$ are. Since $K_0$ has the same properties as $X$, we may continue inductively to obtain the desired resolution $$\cdots \xrightarrow{} I_2 \xrightarrow{d_2} I_1 \xrightarrow{d_1} I_0 \xrightarrow{\epsilon} X \rightarrow 0$$
Finally, we paste the resolution together with $0 \xrightarrow{} X \xrightarrow{\eta} I^0 \xrightarrow{d^0} I^1 \xrightarrow{d^1} I^2 \xrightarrow{d^2} \cdots$ by setting $d_0 = \eta \epsilon$ and we are done.
\end{proof}

Following~\cite{gillespie-recoll}, in the setting of any abelian category $\cat{A}$ with enough injectives, we call a cotorsion pair $(\class{W},\class{F})$ an \textbf{injective cotorsion pair} if it is complete, $\class{W}$ is thick, and $\class{W} \cap \class{F}$ coincides with the class of injective objects. In this case, according to Hovey's correspondence from~\cite{hovey}, the cotorsion pair $(\class{W},\class{F})$ is equivalent to an abelian model structure on $\cat{A}$ where all objects are cofibrant, $\class{F}$ is the class of fibrant objects, and $\class{W}$ are the trivial objects. If $\cat{A}$ has enough projectives, we define the dual notion of a \textbf{projective cotorsion pair}.

Now let $R$ be any ring and let $\class{GI}$ denote the class of Gorenstein AC-injective $R$-modules. Set $\class{W} = \leftperp{\class{GI}}$. Then it follows from what is proved in~\cite[Section~5]{bravo-gillespie-hovey} that $(\class{W},\class{GI})$ is an injective cotorsion pair. The associated model structure generalizes the Gorenstein injective model structure defined in~\cite{hovey} and its generalization to Ding-Chen rings in~\cite{gillespie-ding}. We now show that the analog for chain complexes holds. That is, we now let $\class{GI}$ denote the class of all Gorenstein AC-injective complexes and show that these are the right half of an injective cotorsion pair in the category $\ch$.

\begin{theorem}\label{them-Gorenstein AC-injectives complete cotorsion pair}
Let $R$ be any ring and let $\class{GI}$ denote the class of Gorenstein AC-injective chain complexes. Set $\class{W} = \leftperp{\class{GI}}$.
Then $(\class{W},\class{GI})$ is an injective cotorsion pair in $\ch$. It is cogenerated by a set and so is equivalent to a cofibrantly generated (injective) model structure on $\ch$.
\end{theorem}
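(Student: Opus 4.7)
The plan is to exhibit a set $\class{X}$ cogenerating $(\class{W}, \class{GI})$, deduce completeness and cofibrant generation from Eklof--Trlifaj, and then verify the two remaining conditions of an injective cotorsion pair: that $\class{W} \cap \class{GI}$ coincides with the class of injective complexes, and that $\class{W}$ is thick. Hovey's correspondence then supplies the cofibrantly generated injective model structure.

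For the cogenerating set I would take $\class{X} = \class{S} \cup \{D^n(V) : V \in \class{V},\ n \in \Z\}$, where $\class{V}$ cogenerates the module-level Gorenstein AC-injective cotorsion pair of~\cite{bravo-gillespie-hovey} and $\class{S}$ is the set of absolutely clean complexes produced in Corollary~\ref{cor-absolutely clean complexes are transfinite extensions}. The adjunction $\Ext^1_{\ch}(D^n(V), X) \cong \Ext^1_R(V, X_n)$, combined with Eklof's lemma applied to the transfinite filtration of any absolutely clean complex by members of $\class{S}$, shows that $X \in \rightperp{\class{X}}$ precisely when each $X_n$ is a Gorenstein AC-injective $R$-module and $\Ext^1_{\ch}(A, X) = 0$ for every absolutely clean complex $A$. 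Since $\Ext^1_{dw}$ sits inside $\Ext^1_{\ch}$ and absolutely clean complexes are closed under $\Sigma$, \cite[Lemma~2.1]{gillespie} converts this latter vanishing into exactness of $\homcomplex(A, X)$, and Theorem~\ref{them-characterization of Gorenstein AC-injective complexes} then yields $\rightperp{\class{X}} \subseteq \class{GI}$. Conversely, any $X \in \class{GI}$ has, as the right half of the complete resolution of Definition~\ref{def-Gorenstein AC-injective complex}, an injective coresolution that remains exact under $\Hom_{\ch}(A, -)$ for absolutely clean $A$, forcing $\Ext^n_{\ch}(A, X) = 0$ for $n \geq 1$. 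Thus $\rightperp{\class{X}} = \class{GI}$ and Eklof--Trlifaj supplies a complete, set-cogenerated cotorsion pair.

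To identify $\class{W} \cap \class{GI}$ with the injective complexes, note that every injective complex $I$ is absolutely clean, so the complete injective resolution of any $G \in \class{GI}$ is $\Hom_{\ch}(I, -)$-exact; reading exactness at the appropriate slot yields $\Ext^1_{\ch}(I, G) = 0$ and places $I$ in $\class{W}$. Conversely, if $X \in \class{W} \cap \class{GI}$, splice the left half of its complete resolution into $0 \to X' \to J_0 \to X \to 0$ with $J_0$ injective and $X' \in \class{GI}$; the vanishing $\Ext^1_{\ch}(X, X') = 0$ arising from $X \in \class{W}$ splits the sequence, exhibiting $X$ as a retract of $J_0$.

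The hardest part I expect is thickness of $\class{W}$. I would first verify that $\class{GI}$ is coresolving: the degreewise claim reduces to \cite[Lemma~5.6]{bravo-gillespie-hovey}, and the $\homcomplex(X, -)$-exactness condition of Theorem~\ref{them-characterization of Gorenstein AC-injective complexes} is preserved under extensions and cokernels of monomorphisms because $\Ext^1_R(X_k, G_{k+n}) = 0$ whenever $X_k$ is an absolutely clean $R$-module and $G_{k+n}$ is a Gorenstein AC-injective $R$-module, so applying $\homcomplex(X, -)$ to a short exact sequence in $\class{GI}$ produces a short exact sequence of complexes and exactness propagates through the homology long exact sequence. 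With $\class{GI}$ coresolving, $\class{W}$ inherits vanishing of higher Ext into $\class{GI}$, and two of the three 2-out-of-3 closures of $\class{W}$ drop out of the long exact sequence. The remaining case---that $A, B \in \class{W}$ forces $C \in \class{W}$ in $0 \to A \to B \to C \to 0$---I would settle by the standard special-precover-pullback argument: use completeness to choose $0 \to K \to W \to C \to 0$ with $W \in \class{W}$ and $K \in \class{GI}$, form the pullback of $W \to C$ along $B \to C$, and chain the already-verified 2-out-of-3 directions to conclude $K \in \class{W} \cap \class{GI}$; by the previous paragraph $K$ is then injective, the precover splits, and $C$ is a retract of $W \in \class{W}$.
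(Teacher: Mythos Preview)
Your proof is correct. The construction of the cogenerating set and the verification that $\rightperp{\class{X}} = \class{GI}$ match the paper's argument essentially line for line: same adjunction $\Ext^1_{\ch}(D^n(V),X)\cong\Ext^1_R(V,X_n)$, same appeal to Eklof's lemma through the filtration of Corollary~\ref{cor-absolutely clean complexes are transfinite extensions}, same use of Theorem~\ref{them-characterization of Gorenstein AC-injective complexes} and of~\cite[Lemma~2.1]{gillespie} to pass between $\Ext^1_{dw}$-vanishing and exactness of $\homcomplex(A,-)$.

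Where you diverge is in the treatment of thickness and of $\class{W}\cap\class{GI}$. The paper does not argue these directly. Instead it observes that $(\class{W},\class{GI})$ is a \emph{degreewise orthogonal} cotorsion pair in the sense of~\cite[Definition~3.4]{gillespie-degreewise-model-strucs}, invokes~\cite[Proposition~3.7]{gillespie-degreewise-model-strucs} to identify $\class{W}$ as the class of complexes $W$ with each $W_n$ in the module-level trivial class $\class{W}'$ and with every map $W\to X$ null homotopic for $X\in\class{GI}$, and then reads off thickness from thickness of $\class{W}'$ together with a $\homcomplex$ two-out-of-three argument. For $\class{W}\cap\class{GI}$ it then appeals to~\cite[Proposition~3.6]{gillespie-recoll}, which reduces the question (once thickness is in hand) to checking that injective complexes lie in $\class{W}$. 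Your route is more self-contained: you establish $\class{W}\cap\class{GI}=\text{injectives}$ first by a direct splitting argument on the complete resolution, then prove $\class{GI}$ coresolving and run the standard special-precover pullback trick to obtain the one stubborn two-out-of-three case for $\class{W}$. This avoids importing the machinery of~\cite{gillespie-degreewise-model-strucs} and~\cite{gillespie-recoll}, at the cost of reproducing by hand an argument those references package. Either approach is fine; the paper's is shorter because it outsources, yours would be preferable in a setting where those external results are not already available.
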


\begin{proof}
Again, from~\cite[Section~5]{bravo-gillespie-hovey} we know that the Gorenstein AC-injective $R$-modules are the right half of an injective cotorsion pair in $\rmod$. In particular, Proposition~5.10 of~~\cite{bravo-gillespie-hovey} shows that it is cogenerated by a some set $\class{S}_0$. That is, there is a set of $R$-modules $\class{S}_0$ such that $\rightperp{\class{S}_0}$ is the class of Gorenstein AC-injectives. We also know from Corollary~\ref{cor-absolutely clean complexes are transfinite extensions} that there exists some set $\class{S}_1$ of absolutely clean complexes such that each absolutely clean complex is a transfinite extension of ones in $\class{S}_1$. We may assume that $\class{S}_1$ is closed under suspensions. We let $\class{S} = \class{S}_1 \cup \{D^n(S) \,|\, S \in \class{S}_0 , \, n \in \Z \}$. We claim $\rightperp{\class{S}} = \class{GI}$.

($\subseteq$) Let $X \in \rightperp{\class{S}}$. Using Theorem~\ref{them-characterization of Gorenstein AC-injective complexes} we wish to show $X_n$ is a Gorenstein AC-injective $R$-module and that $\homcomplex(A,X)$ is exact whenever $A$ is an absolutely clean complex. We have that for any $S \in \class{S}_0$, $0 = \Ext^1_{\ch}(D^n(S),X)$. But $\Ext^1_{\ch}(D^n(S),X) \cong \Ext^1_R(S,X_n)$ by a standard isomorphism. Since $\class{S}_0$ cogenerates the Gorenstein AC-injective cotorsion pair, we conclude that each $X_n$ is a Gorenstein AC-injective $R$-module. Now let $A$ be an arbitrary absolutely clean complex and using Corollary~\ref{cor-absolutely clean complexes are transfinite extensions} write it as a transfinite extension $A \cong \varinjlim_{\alpha < \lambda} A_{\alpha}$ where each $A_{\alpha} \in \class{S}_1$. We have $H_n[\homcomplex(A,X)] \cong \Ext^1_{dw}(A,\Sigma^{(-n-1)}X)$ by~\cite[Lemma~2.1]{gillespie}. But $\Ext^1_{dw}(A,\Sigma^{(-n-1)}X) \cong \Ext^1_{dw}(\Sigma^{(n+1)}A,X)$ and this is a subgroup of $\Ext^1_{\ch}(\Sigma^{(n+1)}A,X)$. This latter group must be zero since each $\Ext^1_{\ch}(\Sigma^{(n+1)}A_{\alpha},X) = 0$ is zero by hypothesis. Here we are using Eklof's lemma. For a proof, see for example~\cite[Lemma~6.2]{hovey}.

($\supseteq$) Say $X$ is a Gorenstein AC-injective complex, so each $X_n$ is a Gorenstein AC-injective $R$-module and $\homcomplex(A,X)$ is exact whenever $A$ is an absolutely clean complex. So for any $S \in \class{S}_0$ we have $\Ext^1_{\ch}(D^n(S),X) \cong \Ext^1_R(S,X_n) = 0$. It is left to show $\Ext^1_{\ch}(A,X)=0$ for any $A \in \class{S}_1$. But since each such $A$ is an absolutely clean complex we have $\Ext^1_{\ch}(A,X) = \Ext^1_{dw}(A,X) \cong H_{-1}(A,X) = 0$, again using~\cite[Lemma~2.1]{gillespie}.

Having shown $\rightperp{\class{S}} = \class{GI}$, it follows that $(\leftperp{(\rightperp{\class{S}})}, \rightperp{\class{S}}) = (\class{W},\class{GI})$ is a complete cotorsion pair in $\ch$. We now show that $\class{W}$ is thick. Following the language and notation of~\cite[Definition~3.4]{gillespie-degreewise-model-strucs} it is easy to see that $(\class{W},\class{GI})$ is a \emph{degreewise orthogonal} cotorsion pair in $\ch$ and we denote by  $(\class{W}',\class{GI}')$ the corresponding cotorsion pair in $\rmod$, where $\class{GI}'$ is the class of Gorenstein AC-injective $R$-modules. It follows from~\cite[Proposition~3.7]{gillespie-degreewise-model-strucs} that $\class{W}$ consists precisely of the chain complexes $W$ with each $W_n \in \class{W}'$ and such that any chain map $f : W \rightarrow X$ is null homotopic whenever $X$ is a Gorenstein AC-injective complex. Since we already know that the Gorenstein AC-injective cotorsion pair $(\class{W}',\class{GI}')$ in $\rmod$ is injective, we already have that $\class{W}'$ is thick. It follows that $\class{W}$ is thick too. In particular, note that if $0 \rightarrow U \rightarrow V \rightarrow W \rightarrow 0$ is a short exact sequence, with any two out of three being complexes in $\class{W}$, then the third complex must also have all components in $\class{W}'$. Moreover, for any $X \in \class{GI}$, applying the functor $\homcomplex(-,X)$ yields a short exact sequence $0 \rightarrow \homcomplex(W,X) \rightarrow \homcomplex(V,X) \rightarrow \homcomplex(U,X) \rightarrow 0$ (because all $\Ext$ groups vanish degreewise). So whenever two of the three complexes here are exact, then so is the third. It follows that $\class{W}$ is thick.

It remains to show that $\class{W} \cap \class{GI}$ coincides with the class of injective chain complexes. By~\cite[Propositioin~3.6]{gillespie-recoll} it is now enough to show that $\class{W}$ contains all injective complexes. But since the Gorenstein AC-injective cotorsion pair $(\class{W}',\class{GI}')$ in $\rmod$ is injective, we already know that $\class{W}'$ contains the injective $R$-modules. Since injective complexes $I$ have the property that each $I_n$ is injective and \emph{any} chain map $I \rightarrow X$ is null homotopic, it follows from the characterization of $\class{W}$ given in the last paragraph that the injective complexes $I \in \class{W}$.
\end{proof}

\section{Level and Gorenstein AC-projective chain complexes}\label{sec-level complexes}

Having just studied absolutely clean and Gorenstein AC-injective chain complexes, we naturally wish to do the same with level and Gorenstein AC-projective chain complexes. Recall that while absolutely clean modules are defined via $FP_{\infty}$ modules and the $\Ext$ functor, the level modules are defined via $FP_{\infty}$ and the $\Tor$ functor. 
However we must be careful when generalizing to chain complexes, since the usual tensor product of chain complexes does not characterize flatness. For this we need the modified tensor product and its left derived torsion functor from~\cite{enochs-garcia-rozas} and~\cite{garcia-rozas}. So we start this section by recalling this tensor product and proving a couple of lemmas. These lemmas will then allow us to mimic our work from Sections~\ref{sec-abs clean} and~\ref{sec-Gorenstein AC-inj}.

\subsection{Modified tensor product and Tor functors}\label{subsec-modified tensor and Tor}

We denote by $X \overline{\otimes} Y$, the modified tensor product of chain complexes from~\cite{enochs-garcia-rozas} and~\cite{garcia-rozas}. This is the correct tensor product for characterizing flatness in $\ch$ since a complex $F$ is a direct limit of finitely generated projective complexes if and only if $F \overline{\otimes} -$ is an exact functor. $\overline{\otimes}$ is defined in terms of the usual tensor product $\otimes$ of chain complexes as follows. Given a complex $X$ of right $R$-modules and a complex $Y$ of left $R$-modules, we define $X \overline{\otimes} Y$ to be the complex whose $n$-th entry is $(X \otimes Y)_n / B_n(X \otimes Y)$ with boundary map  $(X \otimes Y)_n / B_n(X \otimes Y) \rightarrow (X \otimes Y)_{n-1} / B_{n-1}(X \otimes Y)$ given by 
\[
\overline{x \otimes y} \mapsto \overline{dx \otimes y}.
\]
This defines a complex and we get a bifunctor $ - \overline{\otimes} - $ which is right exact in each variable. We denote the corresponding left derived functors by $\overline{\Tor}_i$. We refer the reader to~\cite{garcia-rozas} for more details.

\begin{notation}\label{notation}
For a chain complex $X$ and an integer $n$, let $X[n]$ denote the \emph{$n$-th translation of $X$}. It is the chain complex whose degree $k$ is $X_{k-n}$, and whose differentials are unchaged. That is, $X[n]$ is the same as the $n$-th suspension, $\Sigma^n X$, but without the sign change for odd $n$. Also, for a sequence of $R$-modules $\ldots, M_{-2}, M_{-1}, M_0, M_1, M_2, \ldots$ we note that the complex $\cdots \rightarrow M_{2} \xrightarrow{0}  M_{1}  \xrightarrow{0}  M_0 \xrightarrow{0}  M_{-1} \xrightarrow{0}  M_{-2} \rightarrow  \cdots$ is isomorphic to $\bigoplus_{k\in\Z} S^k(M_k)$. We in fact will just use the notation $\bigoplus_{k\in\Z} S^k(M_k)$ to denote this complex. 
\end{notation}

\begin{lemma}\label{lemma-Tor-R}
$\overline{\Tor}_i(S^n(R),X) \cong \Sigma^{n - i} \bigoplus_{k \in \Z} S^k(H_k X)$ for all $i \geq 1$.
\end{lemma}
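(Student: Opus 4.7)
The plan is to compute $\overline{\Tor}_i(S^n(R), X)$ via the long exact sequence of $\overline{\Tor}$ associated to the degreewise split short exact sequence
\[
0 \to S^{k-1}(R) \to D^k(R) \to S^k(R) \to 0,
\]
combined with a direct computation in the case $i = 1$.

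Since each $D^k(R)$ is a projective chain complex, the long exact sequence immediately yields the dimension-shift $\overline{\Tor}_i(S^k(R), X) \cong \overline{\Tor}_{i-1}(S^{k-1}(R), X)$ for all $i \geq 2$. Iterating this shift reduces the claim to proving the base case $\overline{\Tor}_1(S^m(R), X) \cong \Sigma^{m-1} \bigoplus_k S^k(H_k X)$ for arbitrary integer $m$; substituting $m = n - i + 1$ then yields the claimed $\Sigma^{n-i}$ shift.

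For the base case, the long exact sequence identifies $\overline{\Tor}_1(S^m(R), X)$ with the kernel of the natural map $S^{m-1}(R) \overline{\otimes} X \to D^m(R) \overline{\otimes} X$, and I would unwind the definition of $\overline{\otimes}$ directly to pin down this kernel. The key observations are: (a) the complex $S^{m-1}(R) \overline{\otimes} X$ has degree $k$ term $X_{k-m+1}/B_{k-m+1}(X)$ with all differentials zero, because the induced boundary $\overline{x} \mapsto \overline{dx}$ collapses to $0$ modulo $B_{\bullet}(X)$; (b) $D^m(R) \overline{\otimes} X$ is (up to sign) isomorphic to a translate of $X$, since each element of $(D^m(R) \overline{\otimes} X)_k$ has a unique representative of the form $\overline{(a,0)}$ in $(X_{k-m} \oplus X_{k-m+1})/B_k$; and (c) under these identifications the induced map carries $\overline{x} \in X_{k-m+1}/B_{k-m+1}(X)$ to $\pm dx \in X_{k-m}$. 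Its kernel in degree $k$ is therefore $Z_{k-m+1}(X)/B_{k-m+1}(X) = H_{k-m+1}(X)$, still with zero differentials, which is exactly $\Sigma^{m-1} \bigoplus_j S^j(H_j X)$.

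The main technical obstacle will be the Koszul-sign bookkeeping needed to establish (b) and (c): the modified tensor product is a quotient of the ordinary tensor product, whose differential mixes both factors with the usual signs, and one must check that the induced differential on $D^m(R) \overline{\otimes} X$ and the map from $S^{m-1}(R) \overline{\otimes} X$ are transported by the chosen representatives to the asserted formula $\overline{x} \mapsto \pm dx$. Once that bookkeeping is complete, the identification of the kernel with $H_{k-m+1}(X)$ is immediate from the definition of homology, and the dimension-shift from the first paragraph completes the proof.
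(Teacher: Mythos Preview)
Your argument is correct and rests on the same projective resolution the paper uses: splicing the short exact sequences $0 \to S^{k-1}(R) \to D^k(R) \to S^k(R) \to 0$ gives exactly the resolution $\cdots \to D^{n-2}(R) \to D^{n-1}(R) \to D^n(R) \to S^n(R) \to 0$ that the paper writes down explicitly. The difference is organizational. The paper applies $-\,\overline{\otimes}\,X$ to the whole resolution at once, invokes the identification $D^k(R)\,\overline{\otimes}\,X \cong X[k]$ (cited from \cite{garcia-rozas}), and reads the $i$-th homology off from the resulting diagram of shifted copies of $X$; you instead peel off one short exact sequence at a time, dimension-shift via the long exact sequence down to $i=1$, and then compute $\overline{\Tor}_1$ as the kernel of $S^{m-1}(R)\,\overline{\otimes}\,X \to D^m(R)\,\overline{\otimes}\,X$. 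Your observations (a)--(c) amount to the same computation the paper performs at the bottom two rows of its diagram, and both routes hinge on the same identification of $D^m(R)\,\overline{\otimes}\,X$ with a translate of $X$. Your packaging has the mild advantage of confining the sign bookkeeping to the single base case, while the paper's explicit diagram makes the global suspension pattern $\Sigma^{n-i}$ visible at a glance.
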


\begin{proof}
As explicitly shown in the diagram below, we have the projective resolution
$\cdots \rightarrow D^{n-2}(R) \rightarrow D^{n-1}(R) \rightarrow D^{n}(R) \rightarrow S^{n}(R) \rightarrow 0$ of $S^n(R)$.
\[
\begin{tikzpicture}
  \node (1) {$\cdots$};
  \node (2) [right of=1] {R};
  \node (3) [right of=2] {R};
  \node (4) [right of=3] {0};
  \node (5) [above of= 1]{$\cdots$};
  \node (6) [above of=2] {0};
  \node (7) [above of=3] {R};
  \node (8) [above of=4] {R};
  \node (9) [right of=8] {0};
  \node (10) [above of= 6]{$\cdots$};
  \node (11) [above of=7] {0};
  \node (12) [above of=8] {R};
  \node (13) [above of=9] {R};
  \node (14) [right of=13] {0};
  \node (15) [above of= 11]{$\cdots$};
  \node (16) [above of=12] {0};
  \node (17) [above of=13] {R};
  \node (18) [above of=14] {R};
  \node (19) [right of=18] {0};
  \node (20) [above of= 16]{$\cdots$};
  \node (21) [above of=17] {0};
  \node (22) [above of=18] {0};
\draw[->] (1) -- (2); \draw[-,double]  (2) -- (3); \draw[->] (3) -- (4) ;
\draw[->] (5) -- (6); \draw[->]  (6) -- (7); \draw[-, double] (7) -- (8) ; \draw[->] (8) -- (9);
\draw[->] (6) -- (2); \draw[-,double] (7) -- (3); \draw[->] (8) -- (4);
\draw[->] (11) -- (7); \draw[-,double] (12) -- (8); \draw[->] (13) -- (9);
\draw[->] (10) -- (11); \draw[->] (11) -- (12); \draw[-,double] (12) -- (13); \draw[->] (13) -- (14);
\draw[->] (16) -- (12); \draw[-,double] (17) -- (13); \draw[->] (18) -- (14);
\draw[->] (15) -- (16); \draw[->] (16) -- (17); \draw[-,double] (17) -- (18); \draw[->] (18) -- (19);
\draw[->] (20) -- (21); \draw[->] (21) -- (22); 
\draw[->] (22) -- (18); \draw[->] (21) -- (17); 
\end{tikzpicture}
\]
Denote this resolution by $\mathcal P \rightarrow S^n(R)$. To compute $\overline{\Tor}_i(S^n(R),X)$ we take the $i$-th homology of $\mathcal P \overline{\otimes} X$. Using the isomorphisms $D^n(R) \overline{\otimes} X \cong R \otimes_R X[n] \cong X[n]$ (the first isomorphism can be found in~\cite{garcia-rozas}), the complex $\mathcal P \overline{\otimes} X$ becomes
\[
\cdots \rightarrow X[n-2] \rightarrow X[n-1] \rightarrow X[n] \rightarrow 0.
\]
Looking at this resolution vertically, we see it is this:
\[
\begin{tikzpicture}
\node (1) {$X[n-3]$}; 
\node (2) [right of=1] {}; 
\node (3) [xshift=2mm,right of=2] {$\cdots$};
\node (4) [xshift=2mm,right of=3] {$X_{5-n}$};
\node (5) [xshift=4mm,right of=4] {$X_{4-n}$};
\node (6) [xshift=4mm,right of=5] {$X_{3-n}$};
\node (7) [below of=1] {$X[n-2]$}; 
\node (8) [right of=7] {}; 
\node (9) [xshift=2mm,right of=8] {$\cdots$};
\node (10) [xshift=2mm,right of=9] {$X_{4-n}$};
\node (11) [xshift=4mm,right of=10] {$X_{3-n}$};
\node (12) [xshift=4mm,right of=11] {$X_{2-n}$};
\node (13) [below of=7] {$X[n-1]$}; 
\node (14) [xshift=2mm, right of=13] {$=$}; 
\node (15) [right of=14] {$\cdots$};
\node (16) [xshift=2mm,right of=15] {$X_{3-n}$};
\node (17) [xshift=4mm,right of=16] {$X_{2-n}$};
\node (18) [xshift=4mm,right of=17] {$X_{1-n}$};
\node (19) [below of=13] {$X[n]$}; 
\node (20) [right of=19] {}; 
\node (21) [xshift=2mm,right of=20] {$\cdots$};
\node (22) [xshift=2mm,right of=21] {$X_{2-n}$};
\node (23) [xshift=4mm,right of=22] {$X_{1-n}$};
\node (24) [xshift=4mm,right of=23] {$X_{-n}$};
\node (25) [below of=19] {$0$}; 
\node (26) [right of=25] {}; 
\node (27) [right of=26] {};
\node (28) [xshift=4mm,right of=27] {$0$};
\node (29) [xshift=4mm,right of=28] {$0$};
\node (30) [xshift=4mm,right of=29] {$0$};
\node (31) [above of=1] {$\vdots$};
\node (32) [above of=4] {$\vdots$};
\node (33) [above of=5] {$\vdots$};
\node (34) [above of=6] {$\vdots$};
\node (35) [xshift=2mm,right of=6] {$\cdots$};
\node (36) [xshift=2mm,right of=12] {$\cdots$};
\node (37) [xshift=2mm,right of=18] {$\cdots$};
\node (38) [xshift=2mm,right of=24] {$\cdots$};
\node (39) [below of=28] {\scriptsize degree $2$};
\node (40) [below of=29] {\scriptsize degree $1$};
\node (41) [below of=30] {\scriptsize degree $0$};
\draw[->] (1) -- (7); \draw[->] (7) -- (13); \draw[->] (13) -- (19); \draw[->] (19) -- (25);
\draw[->] (4) -- (10); \draw[->] (5) -- (11); \draw[->] (6) -- (12); 
\draw[->] (10) -- (16); \draw[->] (11) -- (17); \draw[->] (12) -- (18); 
\draw[->] (16) -- (22); \draw[->] (17) -- (23); \draw[->] (18) -- (24); 
\draw[->] (22) -- (28); \draw[->] (23) -- (29); \draw[->] (24) -- (30);
\draw[->] (3) -- (4); \draw[->] (4) -- (5); \draw[->] (5) -- (6);  
\draw[->] (9) -- (10); \draw[->] (10) -- (11); \draw[->] (11) -- (12);  
\draw[->] (15) -- (16); \draw[->] (16) -- (17); \draw[->] (17) -- (18);  
\draw[->] (21) -- (22); \draw[->] (22) -- (23); \draw[->] (23) -- (24);  
\draw[->] (31) -- (1); \draw[->] (32) -- (4); \draw[->] (33) -- (5); \draw[->] (34) -- (6);
\draw[->] (6) -- (35); \draw[->] (12) -- (36); \draw[->] (18) -- (37); \draw[->] (24) -- (38);
\end{tikzpicture}
\]
Taking the $i^{th}$-homology (to compute $\overline{\Tor}_i(S^n(R),X)$) gives us the complex
\[
\cdots \xrightarrow{0} H_{(i+2)-n}(X) \xrightarrow{0} H_{(i+1)-n}(X) \xrightarrow{0} H_{i-n}(X) \xrightarrow{0} H_{(i-1)-n} \xrightarrow{0} \cdots
\]
where $H_{i-n}(X)$ is in degree $0$ of the complex. But this is just the chain complex $\bigoplus_{k \in \Z}S^k(H_{(i+k)-n})  = \Sigma^{n - i} \bigoplus_{k \in \Z} S^k(H_k X)$.
\end{proof}

\begin{lemma}\label{lemma-Tor-exact}
Let $X$ be an exact complex. Then for any right $R$-module $M$ and $i \geq 1$, we have
\[
\overline{\Tor}_i (S^n(M),X) \cong \bigoplus_{k \in \mathbb Z} S^k\left( \Tor_i^R (M, X_{k-n}/B_{k-n}X) \right)
\]
\end{lemma}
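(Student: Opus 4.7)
The plan is a dimension-shifting argument that reduces the lemma to the case $i=1$, combined with an explicit degreewise computation of $S^n(L) \overline{\otimes} X$ for an ordinary right $R$-module $L$.

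First, I would compute $S^n(L) \overline{\otimes} X$ for a right $R$-module $L$ and an \emph{arbitrary} complex $X$. The boundary of $\overline{\otimes}$ is $\overline{s \otimes x} \mapsto \overline{ds \otimes x}$, which vanishes identically here since the sphere complex $S^n(L)$ has zero differential. In each degree, $(S^n(L) \otimes X)_k = L \otimes X_{k-n}$, and the subgroup $B_k(S^n(L) \otimes X)$ is the image of the map $L \otimes X_{k-n+1} \to L \otimes X_{k-n}$ induced by $d_{k-n+1}$. Applying right exactness of $L \otimes_R -$ to $X_{k-n+1} \to X_{k-n} \to X_{k-n}/B_{k-n}X \to 0$ identifies this quotient with $L \otimes_R (X_{k-n}/B_{k-n}X)$. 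Hence
\[
S^n(L) \overline{\otimes} X \;\cong\; \bigoplus_{k \in \Z} S^k\bigl(L \otimes_R (X_{k-n}/B_{k-n}X)\bigr).
\]

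Next, I would pick a projective resolution $\cdots \to P_1 \to P_0 \to M \to 0$ of $M$ and let $K_j$ denote its $j$-th syzygy, so that applying the exact functor $S^n(-)$ yields short exact sequences $0 \to S^n(K_j) \to S^n(P_j) \to S^n(K_{j-1}) \to 0$ in $\ch$. For any projective right $R$-module $P$ and $j \geq 1$ we have $\overline{\Tor}_j(S^n(P), X) = 0$ whenever $X$ is exact: since $\overline{\Tor}_j$ commutes with direct sums and retracts, it suffices to check $P = R$, where Lemma~\ref{lemma-Tor-R} gives $\overline{\Tor}_j(S^n(R), X) \cong \Sigma^{n-j}\bigoplus_k S^k(H_k X) = 0$. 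Feeding this into the long exact sequences of $\overline{\Tor}_\bullet(-, X)$ yields the dimension shift $\overline{\Tor}_i(S^n(M), X) \cong \overline{\Tor}_1(S^n(K_{i-2}), X)$ for all $i \geq 1$. The parallel dimension shift in $\rmod$ gives $\Tor^R_i(M, Y) \cong \Tor^R_1(K_{i-2}, Y)$ for any $Y$, so it suffices to prove the $i=1$ case of the lemma.

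Finally, for the $i=1$ case, I would choose a short exact sequence $0 \to K \to P \to N \to 0$ of right $R$-modules with $P$ projective. Using $\overline{\Tor}_1(S^n(P), X) = 0$, the long exact sequence collapses to
\[
\overline{\Tor}_1(S^n(N), X) \;\cong\; \ker\bigl(S^n(K) \overline{\otimes} X \longrightarrow S^n(P) \overline{\otimes} X\bigr).
\]
Since the first step gave both sides zero differential and degreewise shape $L \otimes_R Y_k$ with $Y_k = X_{k-n}/B_{k-n}X$, this kernel is computed degreewise, yielding in degree $k$ precisely $\ker(K \otimes_R Y_k \to P \otimes_R Y_k) = \Tor^R_1(N, Y_k)$. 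Combining this with the dimension shifts of the previous paragraph produces the asserted formula. There is no serious obstacle here; the only genuinely non-routine observation is that $S^n(L) \overline{\otimes} X$ has vanishing differential, which collapses the chain-level computation to degreewise $R$-module $\Tor$, and the book-keeping to match the two parallel dimension shifts.
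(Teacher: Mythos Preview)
Your argument is correct, but it follows a genuinely different route from the paper's proof. The paper resolves the \emph{second} variable: it takes a projective resolution $\cdots \to P'' \to P' \to P \to X \to 0$ of the exact complex $X$ by projective complexes, observes that passing to the quotients $P_n/B_nP$ yields a projective resolution of each $X_n/B_nX$, and then computes $S^n(M) \overline{\otimes} (-)$ termwise on this resolution (using pure exactness of projective complexes to identify the entries). You instead resolve the \emph{first} variable $M$, invoke Lemma~\ref{lemma-Tor-R} to kill $\overline{\Tor}_j(S^n(P),X)$ for projective $P$ and exact $X$, and dimension-shift down to $i=1$, which you then handle with your degreewise formula for $S^n(L)\overline{\otimes} X$.

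Both approaches pivot on the same elementary observation, namely that $S^n(L)\overline{\otimes} Y$ has zero differential with degree~$k$ entry $L\otimes_R(Y_{k-n}/B_{k-n}Y)$; the paper applies it with $Y$ a projective complex in the resolution of $X$, while you apply it with $Y=X$ itself. Your route has the virtue of making the dependence on Lemma~\ref{lemma-Tor-R} explicit and of not needing the (implicit) fact that a projective resolution of an exact complex induces projective resolutions of the cycle quotients. The paper's route is more self-contained and avoids the bookkeeping of two parallel dimension shifts. One small expository point: when you say ``it suffices to prove the $i=1$ case of the lemma'' you really mean the $i=1$ case for an \emph{arbitrary} module (since after shifting you need it for $K_{i-2}$, not $M$); your Step~3 does exactly this, so the mathematics is fine.
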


\begin{proof}
Take a resolution of $X$ by projective complexes $\cdots \to P'' \to P' \to P \to X \to 0$. Note that since $X$ is exact we have for each $n$, a projective resolution of $X_n/B_n X$:
\[
\cdots \to P''_n / B_n P'' \to P'_n / B_n P' \to P_n / B_n P \to X_n / B_n X \to 0.
\]
We apply $S^n(M) \overline{\otimes} -$ to $\mathcal{P} = \cdots \to P'' \to P' \to P \to 0 $ to get $\cdots \to S^n(M) \overline{\otimes} P'' \to S^n(M) \overline{\otimes} P' \to S^n(M) \overline{\otimes} P \to 0 $. We picture this complex vertically and apply the definition of $\overline{\otimes}$ to get:
\[
\begin{tikzpicture}
\node (1) {$\dfrac{M \otimes_R P'_{1-n}}{ B_{1-n}(M \otimes_R P')}$};
\node (2) [xshift=30mm,right of=1] {$\dfrac{M \otimes_R P'_{-n} }{ B_{-n}(M \otimes_R P')}$};
\node (3) [xshift=30mm,right of=2] {$\dfrac{M \otimes_R P'_{-n-1} }{ B_{-n-1}(M \otimes_R P')}$};
\node (4) [yshift=-5mm, below of=1] {$\dfrac{M \otimes_R P_{1-n} }{ B_{1-n}(M \otimes_R P)}$};
\node (5) [xshift=30mm,right of=4] {$\dfrac{M \otimes_R P_{-n} }{ B_{-n}(M \otimes_R P)}$};
\node (6) [xshift=30mm,right of=5] {$\dfrac{M \otimes_R P_{-n-1} }{ B_{-n-1}(M \otimes_R P)}$};
\node (7) [xshift=-10mm,left of=1] {$\cdots$};
\node (8) [xshift=-10mm,left of=4] {$\cdots$};
\node (9) [xshift=10mm,right of=3] {$\cdots$};
\node (10) [xshift=10mm,right of=6] {$\cdots$};
\node (11) [yshift=5mm,above of=1] {$\vdots$};
\node (12) [yshift=5mm,above of=2] {$\vdots$};
\node (13) [yshift=5mm,above of=3] {$\vdots$};
\node (14) [yshift=-3,below of=4] {\text{\tiny deg $1$}};
\node (15) [yshift=-3,below of=5] {\text{\tiny deg $0$}};
\node (16) [yshift=-3,below of=6] {\text{\tiny deg $-1$}};
\draw[->] (7)  -- (1); \draw (-1.5,0.2) node{\text{\tiny $0$}};
\draw[->] (1)  -- (2); \draw (2,0.2) node{\text{\tiny $0$}};
\draw[->] (2)  -- (3); \draw (6,0.2) node{\text{\tiny $0$}};
\draw[->] (3)  -- (9); \draw (9.5,0.2) node{\text{\tiny $0$}};
\draw[->] (8)  -- (4); \draw (-1.5,-1.3) node{\text{\tiny $0$}};
\draw[->] (4)  -- (5); \draw (2,-1.3) node{\text{\tiny $0$}};
\draw[->] (5)  -- (6); \draw (6,-1.3) node{\text{\tiny $0$}};
\draw[->] (6)  -- (10); \draw (9.5,-1.3) node{\text{\tiny $0$}};
\draw[->] (11) -- (1);
\draw[->] (12) -- (2);
\draw[->] (13) -- (3);
\draw[->] (1) -- (4);
\draw[->] (2) -- (5);
\draw[->] (3) -- (6);
\end{tikzpicture}
\]
But $P$ (and similarly for $P'$, $P''$, \ldots) is a pure exact complex and so we see that each
\[
 0 \to M \otimes_R B_nP \to M \otimes_R P_n \to M \otimes_R \left( {P_n}/{B_nP} \right) \to 0
\]
is exact, whence ${M \otimes_R P_n} / B_n(M \otimes_R P) \cong M \otimes_R \left( {P_n}/{B_nP} \right) $. Through this isomorphism, the above complex of complexes becomes
\[
\begin{tikzpicture}
\node (1) {$M \otimes_R\dfrac{ P'_{1-n}}{ B_{1-n}P'}$};
\node (2) [xshift=30mm,right of=1] {$M \otimes_R\dfrac{ P'_{-n} }{ B_{-n} P'}$};
\node (3) [xshift=30mm,right of=2] {$M \otimes_R\dfrac{ P'_{-n-1} }{ B_{-n-1} P'}$};
\node (4) [yshift=-5mm, below of=1] {$M \otimes_R\dfrac{ P_{1-n} }{ B_{1-n} P}$};
\node (5) [xshift=30mm,right of=4] {$M \otimes_R\dfrac{ P_{-n} }{ B_{-n} P}$};
\node (6) [xshift=30mm,right of=5] {$M \otimes_R\dfrac{ P_{-n-1} }{ B_{-n-1} P}$};
\node (7) [xshift=-10mm,left of=1] {$\cdots$};
\node (8) [xshift=-10mm,left of=4] {$\cdots$};
\node (9) [xshift=10mm,right of=3] {$\cdots$};
\node (10) [xshift=10mm,right of=6] {$\cdots$};
\node (11) [yshift=5mm,above of=1] {$\vdots$};
\node (12) [yshift=5mm,above of=2] {$\vdots$};
\node (13) [yshift=5mm,above of=3] {$\vdots$};
\node (14) [yshift=-3,below of=4] {\text{\tiny deg $1$}};
\node (15) [yshift=-3,below of=5] {\text{\tiny deg $0$}};
\node (16) [yshift=-3,below of=6] {\text{\tiny deg $-1$}};
\draw[->] (7)  -- (1); \draw (-1.5,0.2) node{\text{\tiny $0$}};
\draw[->] (1)  -- (2); \draw (2,0.2) node{\text{\tiny $0$}};
\draw[->] (2)  -- (3); \draw (6,0.2) node{\text{\tiny $0$}};
\draw[->] (3)  -- (9); \draw (9.5,0.2) node{\text{\tiny $0$}};
\draw[->] (8)  -- (4); \draw (-1.5,-1.3) node{\text{\tiny $0$}};
\draw[->] (4)  -- (5); \draw (2,-1.3) node{\text{\tiny $0$}};
\draw[->] (5)  -- (6); \draw (6,-1.3) node{\text{\tiny $0$}};
\draw[->] (6)  -- (10); \draw (9.5,-1.3) node{\text{\tiny $0$}};
\draw[->] (11) -- (1);
\draw[->] (12) -- (2);
\draw[->] (13) -- (3);
\draw[->] (1) -- (4);
\draw[->] (2) -- (5);
\draw[->] (3) -- (6);
\end{tikzpicture}
\]
Finally, to get $\overline{\Tor}_i (S^n(M),X)$ we take the $i^{th}$ homology of the above complex to obtain:
\[
\cdots \xrightarrow{0} \Tor_i^R \left( M , \dfrac{X_{1-n}}{B_{1-n}X} \right) \xrightarrow{0} \Tor_i^R \left( M , \dfrac{X_{-n}}{B_{-n}X} \right) \xrightarrow{0} \Tor_i^R \left( M , \dfrac{X_{-n-1}}{B_{-n-1}X} \right)  \xrightarrow{0} \cdots
\]
This is exactly $\bigoplus_{k \in \mathbb Z} S^k\left( \Tor_i^R (M, X_{k-n}/B_{k-n}X) \right)$.
\end{proof}

\subsection{Level chain complexes}

We start with the definition of a level chain complex.

\begin{definition}
We call a chain complex $L$ \textbf{level} if $\overline{\Tor}_1(X,L) = 0$ for all chain complexes $X$ of right $R$-modules of type $FP_{\infty}$.
\end{definition}

\begin{remark}
One might be bothered by the fact that our definition of absolutely clean was in terms of the vanishing of $\Ext^1_{\ch}(X,A)$, an abelian group, while our definition of level is in terms of the vanishing of  $\overline{\Tor}_1(X,L)$, a \emph{complex} of abelian groups. However, the modified tensor product $\overline{\otimes}$ of complexes makes the category of chain complexes into a closed symmetric monoidal category when one considers also the modified Hom-complex $\overline{\homcomplex}$ of~\cite{enochs-garcia-rozas} and~\cite{garcia-rozas}. The right derived functors of $\overline{\homcomplex}$, denoted $\overline{\mathit{Ext}}^i$, satisfy that $\overline{\mathit{Ext}}^i(X,Y)$ is a complex whose degree $n$ is $\Ext^i_{\ch}(X,\Sigma^{-n}Y)$. Using this it is easy to see that a complex $A$ is absolutely clean if and only if $\overline{\mathit{Ext}}^1(X,A) = 0$ for all complexes $X$ of type $FP_{\infty}$. So it is equivalent to define absolutely clean in terms of the the functor $\overline{\mathit{Ext}}^1$.
\end{remark}

We wish to characterize the level chain complexes and they will turn out to be the exact complexes $L$ for which each cycle $Z_nL$ is a level $R$-module. The proof of the following lemma is a straightforward exercise similar to Lemma~\ref{lemma-abs clean lemma}.

\begin{lemma}\label{lemma-level lemma}
A chain complex $L$ is level iff $\overline{\Tor}_1(S^n(M),L)=0$ for all right $R$-modules $M$ of type $FP_{\infty}$.
\end{lemma}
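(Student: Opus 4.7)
The forward direction is immediate: by Proposition~\ref{prop-FP-infinitiy complexes}, $S^n(M)$ is itself a chain complex of type $FP_{\infty}$ whenever $M$ is an $R$-module of type $FP_{\infty}$, so the vanishing $\overline{\Tor}_1(S^n(M),L)=0$ is a special case of $L$ being level. All of the content therefore lies in the converse, and my plan is to mirror exactly the inductive argument sketched for Lemma~\ref{lemma-abs clean lemma}, with $\Ext^1_{\ch}$ replaced by $\overline{\Tor}_1$.

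Let $X$ be a chain complex of type $FP_{\infty}$. By Proposition~\ref{prop-FP-infinitiy complexes}, $X$ is bounded with each $X_n$ an $R$-module of type $FP_{\infty}$, so $X$ has a well-defined length, namely the number of degrees in which it is nonzero. I would induct on this length. The base case of length one says $X \cong S^n(M)$ for some $M$ of type $FP_{\infty}$, which is precisely the hypothesis. For the inductive step, suppose $X$ is supported in degrees $[a,b]$ with $b>a$. Then $S^a(X_a)$ sits as a subcomplex of $X$ (its single nontrivial differential lands in $X_{a-1}=0$), and the quotient $X/S^a(X_a)$ is supported in $[a+1,b]$ with components $X_{a+1},\dots,X_b$, all of type $FP_{\infty}$. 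Hence $X/S^a(X_a)$ is again of type $FP_{\infty}$ by Proposition~\ref{prop-FP-infinitiy complexes}, of strictly smaller length than $X$.

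Applying the long exact sequence of the left derived functors of $-\overline{\otimes} L$ to the short exact sequence $0 \to S^a(X_a) \to X \to X/S^a(X_a) \to 0$ gives
\[
\overline{\Tor}_1(S^a(X_a),L) \to \overline{\Tor}_1(X,L) \to \overline{\Tor}_1(X/S^a(X_a),L),
\]
where the first term vanishes by hypothesis (since $X_a$ is of type $FP_{\infty}$) and the third vanishes by the inductive hypothesis, forcing $\overline{\Tor}_1(X,L)=0$. I do not anticipate a genuine obstacle; the only ingredient beyond formal bookkeeping is that $\overline{\Tor}_i$ admits a long exact sequence in the first variable, which is automatic since it is the left derived functor of the right exact bifunctor $\overline{\otimes}$, computable via a projective resolution in either slot.
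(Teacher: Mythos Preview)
Your argument is correct and follows exactly the approach the paper intends: the paper explicitly says the proof is the same as Lemma~\ref{lemma-abs clean lemma}, namely induction on the length of the bounded complex $X$, expressing it as an extension of a length-one complex by a shorter one, and then using the long exact sequence (here in $\overline{\Tor}$ rather than $\Ext$). There is nothing to add.
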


\begin{proposition}\label{prop-level chain complexes}
A chain complex $L$ is level if and only if $L$ is exact and each $Z_nL$ is a level $R$-module.
\end{proposition}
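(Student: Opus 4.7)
The plan is to deduce both directions of Proposition~\ref{prop-level chain complexes} directly from Lemmas~\ref{lemma-level lemma}, \ref{lemma-Tor-R}, and~\ref{lemma-Tor-exact}, in close parallel to the proof of Proposition~\ref{prop-absolutely clean chain complexes}.

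For the ($\Rightarrow$) direction, I would first detect exactness. Since $R$ is itself of type $FP_{\infty}$ as a right module over itself, Lemma~\ref{lemma-level lemma} gives $\overline{\Tor}_1(S^n(R),L)=0$ for every $n$. Plugging into Lemma~\ref{lemma-Tor-R} with $i=1$ yields $\Sigma^{n-1}\bigoplus_{k\in\Z} S^k(H_kL)=0$, forcing $H_kL=0$ for all $k$, so $L$ is exact. Now that $L$ is exact, I would apply Lemma~\ref{lemma-Tor-exact}: for any right $R$-module $M$ of type $FP_{\infty}$,
\[
0 \;=\; \overline{\Tor}_1(S^n(M),L) \;\cong\; \bigoplus_{k\in\Z} S^k\bigl(\Tor_1^R(M,L_{k-n}/B_{k-n}L)\bigr).
\]
Since this direct sum vanishes, each $\Tor_1^R(M,L_{k-n}/B_{k-n}L)=0$. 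Using exactness of $L$, the differential induces an isomorphism $L_{k-n}/B_{k-n}L \cong Z_{k-n-1}L$ (because $B_{k-n}L = Z_{k-n}L$, and $L_{k-n}/Z_{k-n}L \cong B_{k-n-1}L = Z_{k-n-1}L$). Thus $\Tor_1^R(M,Z_mL)=0$ for every $m$ and every right module $M$ of type $FP_{\infty}$, i.e.\ each $Z_mL$ is a level $R$-module.

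For the ($\Leftarrow$) direction, the argument simply reverses: assume $L$ is exact and each $Z_mL$ is level. For any right $R$-module $M$ of type $FP_{\infty}$, Lemma~\ref{lemma-Tor-exact} gives
\[
\overline{\Tor}_1(S^n(M),L) \;\cong\; \bigoplus_{k\in\Z} S^k\bigl(\Tor_1^R(M,L_{k-n}/B_{k-n}L)\bigr) \;\cong\; \bigoplus_{k\in\Z} S^k\bigl(\Tor_1^R(M,Z_{k-n-1}L)\bigr),
\]
which vanishes by hypothesis on the cycles. Applying Lemma~\ref{lemma-level lemma} then shows $L$ is level.

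There is really no serious obstacle here; the whole content has been absorbed into the two technical computations of Subsection~\ref{subsec-modified tensor and Tor}. The only small point to be careful about is the identification $L_{k-n}/B_{k-n}L \cong Z_{k-n-1}L$ when $L$ is exact, which must be verified explicitly rather than being asserted tacitly, because the hypothesis on the cycles is phrased in terms of $Z_nL$ rather than in terms of quotients $L_n/B_nL$.
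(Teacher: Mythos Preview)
Your proposal is correct and follows essentially the same route as the paper's own proof: reduce to spheres via Lemma~\ref{lemma-level lemma}, detect exactness from $\overline{\Tor}_1(S^n(R),L)=0$ using Lemma~\ref{lemma-Tor-R}, then identify each $\Tor_1^R(M,Z_mL)$ with a summand of $\overline{\Tor}_1(S^n(M),L)$ via Lemma~\ref{lemma-Tor-exact} and the isomorphism $L_k/B_kL \cong Z_{k-1}L$. The paper's proof is slightly terser but uses exactly these ingredients in the same order.
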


\begin{proof}
Say $L$ is a level complex. Then by Lemma~\ref{lemma-level lemma} we see that $\overline{\Tor}_1(S^n(M),L) = 0$ for all right modules $M$ of type $FP_{\infty}$. Since $R_R$ is of type $FP_{\infty}$, we have $0  = \overline{\Tor}_1(S^1(R),L)$ and so $L$ is exact from Lemma~\ref{lemma-Tor-R}.  Now by Lemma~\ref{lemma-Tor-exact} we have that
\[
0 = \overline{\Tor}_1(S^0(M),L) \cong \bigoplus_{k \in \mathbb Z} S^k\left( \Tor_1^R (M, L_{k}/B_{k}L) \right)
\]  for each right module $M$ of type $FP_{\infty}$.
Since $L$ is exact we have $L_{k}/B_{k}L \cong Z_{k-1}L$, so this shows each $Z_nL$ is a level $R$-module.

On the other hand, if $L$ is exact and each $Z_nL$ is level, then applying Lemmas~\ref{lemma-level lemma} and Lemma~\ref{lemma-Tor-exact} we conclude $L$ is level.
\end{proof}

Since $\Q$ is an injective cogenerator for the category of abelian groups, the functor $\Hom_{\Z}(-,\Q)$ preserves and reflects exactness. So Proposition~\ref{prop-level chain complexes} immediately gives us the following corollary due to the perfect duality between absolutely clean and level modules~\cite[Theorem~2.10]{bravo-gillespie-hovey}.

\begin{corollary}\label{cor-duality}
A chain complex $L$ of left (resp. right) modules is level if and only if $L^+ = \Hom_{\Z}(L,\Q)$ is an absolutely clean complex of right (resp. left) modules. And, a chain complex $A$ of left (resp. right) modules is absolutely clean if and only if $L^+ = \Hom_{\Z}(L,\Q)$ is a level complex of right (resp. left) modules.
\end{corollary}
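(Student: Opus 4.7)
The plan is to reduce the corollary, via the structural characterizations already proved, to two separate facts: a global exactness fact and a degreewise (module-level) duality fact. Specifically, by Proposition~\ref{prop-absolutely clean chain complexes} an arbitrary chain complex is absolutely clean iff it is exact with absolutely clean cycles, and by Proposition~\ref{prop-level chain complexes} a complex is level iff it is exact with level cycles. So it suffices to show that (i) $L$ is exact iff $L^+$ is exact, and (ii) when $L$ is exact, each cycle module $Z_k(L^+)$ is, up to reindexing, the Pontryagin dual of a cycle module $Z_jL$, and conversely every $(Z_nL)^+$ arises this way.

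For (i), the point is that $\Q$ is a faithful injective cogenerator of the category of abelian groups, so $\Hom_{\Z}(-,\Q)$ both preserves and reflects exactness of sequences of abelian groups; applying this degreewise gives the claim for complexes. For (ii), starting from an exact complex $L$ we have the short exact sequences $0 \rightarrow Z_nL \rightarrow L_n \rightarrow Z_{n-1}L \rightarrow 0$. Applying the exact functor $\Hom_{\Z}(-,\Q)$ produces short exact sequences $0 \rightarrow (Z_{n-1}L)^+ \rightarrow (L_n)^+ \rightarrow (Z_nL)^+ \rightarrow 0$. Writing out the differential of $L^+$ as the dualization of the differential of $L$ and using that a kernel of a dualized map equals the dual of the corresponding cokernel, one identifies $Z_k(L^+) \cong (Z_jL)^+$ for the appropriate $j$ (with $j$ running over all integers as $k$ does). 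Combining (i) and (ii) with the perfect module-level duality~\cite[Theorem~2.10]{bravo-gillespie-hovey}, which says that $M$ is level iff $M^+$ is absolutely clean and $M$ is absolutely clean iff $M^+$ is level, yields both directions of both statements: $L$ level is equivalent to $L$ exact with each $Z_nL$ level, which in turn is equivalent (by (i), (ii) and the module duality) to $L^+$ exact with each $Z_k(L^+)$ absolutely clean, i.e.\ $L^+$ absolutely clean. The argument for absolutely clean $\Leftrightarrow$ level under $(-)^+$ is symmetric.

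The only real obstacle is the bookkeeping in step (ii): making sure that the convention used to view $L^+$ as a chain complex (reindexing the cochain complex $n \mapsto (L_{-n})^+$, with the induced sign convention from Section~\ref{sec-intro}) makes the kernels of its differentials correspond cleanly to duals of cycle modules of $L$. This is formal but needs to be stated carefully so that the range of indices $k$ in $Z_k(L^+)$ exhausts the duals of all $Z_jL$; one just checks that the identification $Z_k(L^+) \cong (Z_{-k-1}L)^+$ is a bijection on cycles, and then the module-level duality handles the rest.
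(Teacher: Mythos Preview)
Your proof is correct and follows essentially the same approach as the paper: reduce to the characterizations in Propositions~\ref{prop-absolutely clean chain complexes} and~\ref{prop-level chain complexes}, use that $\Hom_{\Z}(-,\Q)$ preserves and reflects exactness, and invoke the module-level duality~\cite[Theorem~2.10]{bravo-gillespie-hovey}. The paper's proof is a one-line sketch that leaves the cycle identification $Z_k(L^+)\cong (Z_{-k-1}L)^+$ implicit, whereas you spell it out; both are the same argument.
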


\subsection{Properties of level complexes}

We now prove that the class of level complexes possesses very nice properties similar to those satisfied by absolutely clean complexes.

\begin{proposition}\label{prop-properties of level complexes}
For any ring $R$ the following hold:
\begin{enumerate}
\item If $L$ is a level chain complex, then $\overline{\Tor}_n(X,L) = 0$ for all $n > 0$ and chain complexes of right $R$-modules $X$ of type $FP_{\infty}$.
\item The class of level chain complexes is closed under pure subcomplexes and pure quotients. 
\item The class of level chain complexes is resolving; that is, it contains the projective chain complexes and is closed under extensions and kernels of epimorphisms.
\item The class of level chain complexes is closed under direct products, direct sums, retracts, direct limits, and transfinite extensions.
\end{enumerate}
\end{proposition}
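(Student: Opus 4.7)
The strategy is to mirror the proof of Proposition~\ref{prop-properties of absolutely clean complexes}, relying on the characterization from Proposition~\ref{prop-level chain complexes}, the duality of Corollary~\ref{cor-duality}, and the long exact sequence for $\overline{\Tor}$. Part (1) is proved by the same dimension-shifting argument as in the absolutely clean case: take a resolution of $X$ of type $FP_{\infty}$ by finitely generated projective complexes $P_{\bullet}$, set $X_0 = X$ and $X_i = \im(P_i \to P_{i-1})$ for $i > 0$, invoke the thickness of type $FP_{\infty}$ complexes (Corollary~\ref{cor-thickness}) to conclude each $X_i$ is of type $FP_{\infty}$, and dimension-shift to obtain $\overline{\Tor}_n(X,L) \cong \overline{\Tor}_1(X_{n-1},L) = 0$.

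For part (3), projective complexes are level since $\overline{\Tor}_i$ vanishes on them (a standard fact from~\cite{garcia-rozas}). Closure under extensions and kernels of epimorphisms then follows from the long exact $\overline{\Tor}$ sequence combined with part (1). For instance, given $0 \to L' \to L \to L'' \to 0$ with $L$ and $L''$ level, the portion $\overline{\Tor}_2(X,L'') \to \overline{\Tor}_1(X,L') \to \overline{\Tor}_1(X,L)$ has outer terms zero (the left by part (1) applied to the level complex $L''$), forcing $L'$ to be level. Part (4) follows from Proposition~\ref{prop-level chain complexes}: products, direct sums, and direct limits of exact complexes remain exact (these are all exact functors on $\rmod$ and they commute with the cycle functor $Z_n$), while the corresponding closure properties of level $R$-modules proved in~\cite{bravo-gillespie-hovey} supply that each $Z_n$ of the result is a level $R$-module. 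Retracts are immediate from the vanishing of $\overline{\Tor}_1$, and transfinite extensions reduce to closure under extensions (part 3) plus direct limits, exactly as in the absolutely clean case.

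For part (2), the cleanest path is via the character-dual duality of Corollary~\ref{cor-duality}. The chain-complex analog of the classical module fact says that applying $(-)^+ = \Hom_{\Z}(-,\Q)$ to a pure exact sequence of complexes produces a split exact (hence in particular pure exact) sequence. Applied to the pure exact $0 \to P \to L \to L/P \to 0$, this yields a split exact $0 \to (L/P)^+ \to L^+ \to P^+ \to 0$. Since $L^+$ is absolutely clean by Corollary~\ref{cor-duality}, and the absolutely clean complexes are closed under pure subcomplexes and pure quotients by Proposition~\ref{prop-properties of absolutely clean complexes}(2), both $(L/P)^+$ and $P^+$ are absolutely clean. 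A second application of Corollary~\ref{cor-duality} then gives that $P$ and $L/P$ are level. The main obstacle is precisely verifying this dualization fact in the chain-complex setting; one alternative is to work directly with the $\overline{\Tor}$ long exact sequence, which would however require knowing that pure exact sequences of complexes remain exact after $X \overline{\otimes} -$ for any $X$, and then using part (1) to kill the relevant $\overline{\Tor}_2$ term.
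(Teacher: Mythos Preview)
Your arguments for parts (1), (3), and (4) match the paper's essentially verbatim. For part (2) you take a genuinely different route: you pass through the character dual and invoke Corollary~\ref{cor-duality} together with Proposition~\ref{prop-properties of absolutely clean complexes}(2), whereas the paper argues directly with $\overline{\Tor}$. Specifically, the paper cites~\cite[Theorem~5.1.3]{garcia-rozas} to the effect that pure exactness of $E$ is equivalent to $X \overline{\otimes} E$ being exact for every finitely presented complex $X$ of right modules; since complexes of type $FP_{\infty}$ are finitely presented, the long exact $\overline{\Tor}$ sequence then forces $\overline{\Tor}_1(X,L'') = 0$ (from $\overline{\Tor}_1(X,L) = 0$ and injectivity of $X \overline{\otimes} L' \to X \overline{\otimes} L$), and part (1) kills $\overline{\Tor}_2(X,L'')$ to give $\overline{\Tor}_1(X,L') = 0$. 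This is exactly the ``alternative'' you sketch at the end of your proposal, so you already had the paper's argument in hand. Your duality route is also correct---the fact you flag as the obstacle (that the character dual of a pure exact sequence of complexes is split exact) is among the equivalent characterizations in~\cite[Theorem~5.1.3]{garcia-rozas}---but it is less direct, and it obscures the one genuine asymmetry the paper points out: the $\overline{\Tor}$ argument needs $E$ to be pure, not merely clean, because the tensor characterization of purity has no obvious ``clean'' analogue, in contrast to the $\Hom$ situation in Proposition~\ref{prop-properties of absolutely clean complexes}(2).
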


\begin{proof}
Let $X$ be a chain complex of right $R$-modules of type $FP_{\infty}$ and take a resolution $P_*$ by finitely generated projective complexes. Set $X_0 = X$ and for each $i > 0$, let $X_i = \im (P_{i} \to P_{i-1})$, and thus from the following exact sequence
\[
 \cdots \to P_{i+2} \to P_{i+1} \to P_{i} \to X_i \to 0
\]
we see that each $X_i$ is also of type $FP_{\infty}$. Hence by dimension shifting we get that for any level complex $L$,
\[
 0 = \overline{\Tor}_1(X_{n-1},L) \cong \overline{\Tor}_n(X,L)
\]
This proves the first statement.

For the second statement, suppose that $L$ is a level chain complex and that
\[
E \; : \; 0 \to L' \to L \to L'' \to 0
\]
is a pure exact sequence of chain complexes, meaning $\Hom_{\ch}(X,E)$ is exact for any finitely presented complex $X$ (of left modules). By~\cite[Theorem~5.1.3]{garcia-rozas} this is equivalent to the statement that $X \overline{\otimes} E$ is exact for any finitely presented complex $X$ (of right $R$-modules).
So in particular $X \overline{\otimes} E$ is exact for any chain complex $X$ of right modules of type $FP_{\infty}$. Therefore $\overline{\Tor}_1(X,L'')$ must be zero since $\overline{\Tor}_1(X,L)$ is zero; thus $L''$ is a level chain complex. By the first part, we also have that $\overline{\Tor}_2(X,L'')=0$, and so $\overline{\Tor}_1(X,L')$ is the zero complex too. Hence $L'$ is also a level chain complex. (Note that it does NOT appear as though this argument will work when $E$ is just a clean exact sequence, as was the case for the class of absolutely clean complexes.)

Now for the third statement suppose that
\[
0 \to L' \to L \to L'' \to 0
\]
is a short exact sequence with $L$ and $L''$ level chain complexes. By applying $X \overline{\otimes} -$ to this sequence, where $X$ is of type $FP_{\infty}$, we get that $\overline{\Tor}_1(X,L')$ is trapped between the two zero complexes $\overline{\Tor}_1(X,L)$ and $\overline{\Tor}_2(X,L'')$, and so it is also zero. This gives us that $L'$ is a level chain complex. Similarly, if $L'$ and $L''$ are level, then we see $\overline{\Tor}_1(X,L)=0$, whenever $X$ is of type $FP_{\infty}$. Projective complexes are certainly level, so we have proved the third statement.

For the fourth statement, one can use the characterization of level complexes from Proposition~\ref{prop-level chain complexes} along with the fact that each corresponding fact is true in $R$-Mod by~\cite[Proposition~2.8]{bravo-gillespie-hovey}. For example, since $R$-Mod satisfies Grothendieck's AB4, AB4*, and AB5, exact sequences are closed under direct sums, direct products, and direct limits.  
\end{proof}

\begin{corollary}\label{cor-level complexes are transfinite extensions}
There exists a cardinal $\kappa$ such that every level chain complex is a transfinite extension of level complexes with cardinality bounded by $\kappa$. In particular, there is a set $\class{S}$ of level complexes for which every level complex is a transfinite extension of ones in $\class{S}$.
\end{corollary}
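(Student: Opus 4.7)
The plan is to simply imitate the proof of Corollary~\ref{cor-absolutely clean complexes are transfinite extensions}, invoking the general transfinite decomposition result of Proposition~\ref{prop-transfinite} together with the closure properties of level complexes already established. So the argument should read essentially ``immediate from Proposition~\ref{prop-transfinite} and Proposition~\ref{prop-properties of level complexes}''.

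In more detail, I would first observe that by Proposition~\ref{prop-properties of level complexes}(2), the class $\class{L}$ of level chain complexes is closed under pure subcomplexes and pure quotients. This is exactly the hypothesis of Proposition~\ref{prop-transfinite}, so that proposition applies directly to $\class{L}$, producing a cardinal $\kappa$ (which by the proof of Proposition~\ref{prop-transfinite} may be taken to be any regular cardinal $\kappa > |R|$) such that every level complex is a transfinite extension of level complexes of cardinality at most $\kappa$. This gives the first assertion.

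For the ``in particular'' statement, I would note that up to isomorphism there is only a set of chain complexes of cardinality bounded by $\kappa$; one can then choose a representative from each isomorphism class of such level complexes to obtain a set $\class{S}$ of level complexes of bounded cardinality. Every level complex is then, by the first part, a transfinite extension of complexes in $\class{S}$.

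No real obstacle is expected, since all the work has been absorbed into Proposition~\ref{prop-transfinite} and into the closure properties proved in Proposition~\ref{prop-properties of level complexes}; the only mild point to mention is the passage from ``bounded cardinality'' to ``a set'', handled by choosing representatives of isomorphism classes.
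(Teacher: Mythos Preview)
Your proposal is correct and matches the paper's own proof, which simply says the result is immediate from Proposition~\ref{prop-properties of level complexes} and Proposition~\ref{prop-transfinite}. Your additional remarks on choosing isomorphism-class representatives to obtain the set $\class{S}$ are a reasonable elaboration of what the paper leaves implicit.
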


\begin{proof}
Immediate Propositions~\ref{prop-properties of level complexes} and~\ref{prop-transfinite}. 
\end{proof}

\begin{corollary}\label{cor-level complex cotorsion pair}
For any ring $R$, the class of level complexes are the left half of a complete hereditary cotorsion pair cogenerated by a set. Moreover this is a perfect cotorsion pair, meaning every complex has a level cover. 
\end{corollary}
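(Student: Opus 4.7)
The plan is a standard application of the Eklof-Trlifaj theorem together with the closure properties of level complexes established in Proposition~\ref{prop-properties of level complexes}. First, I would invoke Corollary~\ref{cor-level complexes are transfinite extensions} to obtain a set $\class{S}$ of level complexes such that every level complex is a transfinite extension of members of $\class{S}$. Then by the Eklof-Trlifaj theorem (see~\cite{enochs-jenda-book}), the pair $(\leftperp{(\rightperp{\class{S}})}, \rightperp{\class{S}})$ is a complete cotorsion pair in $\ch$ cogenerated by the set $\class{S}$.

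The next step is to identify $\leftperp{(\rightperp{\class{S}})}$ with the class of level complexes. For the containment $\supseteq$, any level complex $L$ is a transfinite extension of complexes in $\class{S} \subseteq \leftperp{(\rightperp{\class{S}})}$, so Eklof's lemma (see~\cite[Lemma~6.2]{hovey}) places $L$ in $\leftperp{(\rightperp{\class{S}})}$. For the containment $\subseteq$, one uses that every object of $\leftperp{(\rightperp{\class{S}})}$ is a direct summand of a transfinite extension of objects in $\class{S}$. Since $\class{S}$ consists of level complexes and, by Proposition~\ref{prop-properties of level complexes}, level complexes are closed under extensions, transfinite extensions, and retracts, this forces such an object to be level.

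For the hereditary property, I would appeal to Proposition~\ref{prop-properties of level complexes}(3): the class of level complexes is resolving, i.e., closed under kernels of epimorphisms between its members. This is well known to be equivalent to the cotorsion pair being hereditary.

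Finally, to upgrade completeness to a perfect cotorsion pair, I would invoke the theorem (due to Enochs and also found in~\cite{enochs-jenda-book}) stating that a complete cotorsion pair whose left class is closed under direct limits is perfect, so every object admits a cover by the left class. Proposition~\ref{prop-properties of level complexes}(4) provides exactly this closure under direct limits, giving level covers for every complex. The only mild subtlety in the plan is ensuring that the characterization of $\leftperp{(\rightperp{\class{S}})}$ via retracts of $\class{S}$-filtered objects is available in $\ch$; this follows because $\ch$ is a Grothendieck category with a generator, which is the standard setting in which Eklof-Trlifaj and its retract refinement apply.
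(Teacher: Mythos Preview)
Your proposal is correct and follows essentially the same route as the paper's proof: invoke Corollary~\ref{cor-level complexes are transfinite extensions} to obtain the set $\class{S}$, use the Eklof--Trlifaj machinery to produce the complete cotorsion pair and identify its left class via retracts of $\class{S}$-filtered objects together with the closure properties of Proposition~\ref{prop-properties of level complexes}, and then read off hereditariness and perfection from resolving and direct-limit closure. The paper states the retract-of-filtered description of $\leftperp{(\rightperp{\class{S}})}$ as a single ``standard fact'' rather than separating the two containments as you do, but the argument is otherwise the same.
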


\begin{proof}
It is a standard fact that any set $\class{S}$ in $\ch$ will cogenerate a complete cotorsion pair $(\leftperp{(\rightperp{\class{S}})},\rightperp{\class{S}})$, where $\leftperp{(\rightperp{\class{S}})}$ consists precisely of all retracts of transfinite extensions of complexes in $\class{S}$. Taking $\class{S}$ to be as in Corollary~\ref{cor-level complexes are transfinite extensions} we see that $\leftperp{(\rightperp{\class{S}})}$ is indeed the class of level complexes, because level complexes are closed under retracts and transfinite extensions by Proposition~\ref{prop-properties of level complexes}. That same proposition says that the cotorsion pair is hereditary. Since the level complexes are closed under direct limits it follows that every complex has a level cover. 
\end{proof}

\begin{remark}\label{remark-level-pair}
Let $(\class{F},\class{C})$ denote the level cotorsion pair in $R$-Mod. Then in the notation of~\cite{gillespie} the cotorsion pair of Corollary~\ref{cor-level complex cotorsion pair} is precisely $(\tilclass{F},\dgclass{C})$. This is immediate from Proposition~\ref{prop-level chain complexes}.  
\end{remark}

\subsection{Gorenstein AC-projective chain complexes}\label{sec-Gorenstein AC-proj}

Dualizing Definition~\ref{def-Gorenstein AC-injective complex}, we get the following.

\begin{definition}\label{def-Gorenstein AC-projective complex}
We call a chain complex $X$ \textbf{Gorenstein AC-projective} if there exists an exact complex of projective complexes $$\cdots \rightarrow P_1 \rightarrow P_0 \rightarrow P^0 \rightarrow P^1 \rightarrow \cdots$$ with $X = \ker{(P^0 \rightarrow P^1)}$ and which remains exact after applying $\Hom_{\ch}(-,L)$ for any level chain complex $L$.
\end{definition}

\begin{theorem}\label{them-characterization of Gorenstein AC-projective complexes}
A chain complex $X$ is Gorenstein AC-projective if and only if each $X_n$ is a Gorenstein AC-projective $R$-module and $\homcomplex(X,L)$ is exact for any level chain complex $L$.
Equivalently, each $X_n$ is Gorenstein AC-projective and any chain map $f : X \rightarrow L$ is null homotopic whenever $L$ is a level complex.
\end{theorem}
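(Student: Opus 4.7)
The plan is to mimic the proof of Theorem~\ref{them-characterization of Gorenstein AC-injective complexes}, dualizing throughout so that injective chain complexes become projective, and absolutely clean complexes become level. The key inputs are that the class of Gorenstein AC-projective $R$-modules is resolving and satisfies $\Ext^1_R(G,L)=0$ whenever $G$ is Gorenstein AC-projective and $L$ is level (from~\cite{bravo-gillespie-hovey}), that for any level $R$-module $L$ the disk $D^{n+1}(L)$ is a level chain complex by Proposition~\ref{prop-level chain complexes} (it is exact with $Z_n = L$ its only nonzero cycle module), and the formula $H_n[\homcomplex(X,L)] \cong \Ext^1_{dw}(\Sigma^{n+1}X,L)$ from~\cite[Lemma~2.1]{gillespie}.

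For ($\Rightarrow$), a complete projective resolution of $X$ evaluates degreewise to an exact complex of projective $R$-modules with $X_n$ at the appropriate kernel. Via the adjunction $\Hom_{\ch}(Y,D^{n+1}(L)) \cong \Hom_R(Y_n,L)$ and the levelness of $D^{n+1}(L)$, this degreewise sequence stays exact under $\Hom_R(-,L)$ for any level $R$-module $L$, so each $X_n$ is Gorenstein AC-projective. The resolution also gives $\Ext^1_{\ch}(X,L)=0$ for all level chain complexes $L$, hence $\Ext^1_{dw}(X,L)=0$, and the displayed formula then yields exactness of $\homcomplex(X,L)$.

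For ($\Leftarrow$), the left half is straightforward. Using enough projectives in $\ch$, pick $\epsilon\colon P_0 \twoheadrightarrow X$ with $P_0$ projective and let $K = \ker\epsilon$. Each $K_n$ is Gorenstein AC-projective since that class is resolving in $\rmod$. Applying $\homcomplex(-,L)$ to $0 \to K \to P_0 \to X \to 0$: since $\Ext^1_R(X_k,L_{k+n}) = 0$ for all $k,n$, this yields a short exact sequence of chain complexes whose middle term is exact (projective chain complexes are contractible) and whose right term is exact by hypothesis, so $\homcomplex(K,L)$ is exact. Iterate.

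The right-hand coresolution $X \hookrightarrow P^0 \to P^1 \to \cdots$ is the main obstacle, and requires a dual of the ``contractible cover'' trick. Begin with the degreewise split short exact sequence
\[
0 \to X \to \bigoplus_{n\in\Z} D^{n+1}(X_n) \to \Sigma X \to 0.
\]
For each $n$, choose $0 \to X_n \to Q_n \to Y_n \to 0$ with $Q_n$ projective and $Y_n$ Gorenstein AC-projective; apply $D^{n+1}$ and take the sum to obtain a short exact sequence of projective chain complexes. Composing the inclusions gives a monomorphism $X \hookrightarrow P^0 := \bigoplus_n D^{n+1}(Q_n)$ whose cokernel $K^0$ fits, by the third isomorphism theorem, into $0 \to \Sigma X \to K^0 \to \bigoplus_n D^{n+1}(Y_n) \to 0$, and is therefore degreewise Gorenstein AC-projective. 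Exactness of $\homcomplex(K^0,L)$ follows exactly as in the left-half argument. Iterating and pasting via $d_0 = (X \hookrightarrow P^0) \circ \epsilon$ produces the complete projective resolution required by Definition~\ref{def-Gorenstein AC-projective complex}.
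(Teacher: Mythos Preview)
Your proposal is correct and follows essentially the same route as the paper's proof: the same adjunction $\Hom_{\ch}(Y,D^{n+1}(L))\cong\Hom_R(Y_n,L)$ for the forward direction, the same resolving-class argument for the left half of the converse, and the same ``contractible hull'' construction $X\hookrightarrow\bigoplus_n D^{n+1}(X_n)\hookrightarrow\bigoplus_n D^{n+1}(Q_n)$ for the right half, with the snake lemma (your third isomorphism theorem) identifying the cokernel. The only points worth tightening are: when invoking $H_n[\homcomplex(X,L)]\cong\Ext^1_{dw}(\Sigma^{n+1}X,L)$ you should note that levelness passes to suspensions so the vanishing applies, and you should state explicitly that the assembled resolution remains exact under $\Hom_{\ch}(-,L)$ because $\Ext^1_{\ch}(K_i,L)=\Ext^1_{dw}(K_i,L)\cong H_{-1}[\homcomplex(K_i,L)]=0$.
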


\begin{proof}
\noindent ($\Rightarrow$) Let $X$ be a Gorenstein AC-projective complex.  Then there exists an exact complex of projective complexes $$\cdots \rightarrow P_1 \rightarrow P_0 \rightarrow P^0 \rightarrow P^1 \rightarrow \cdots$$ with $X = \ker{(P^0 \rightarrow P^1)}$ and which remains exact after applying $\Hom_{\ch}(-,L)$ for any level chain complex $L$. We first wish to show that each $X_n$ is a Gorenstein AC-projective $R$-module. Of course we have the exact complex of projective $R$-modules $$\cdots \rightarrow (P_1)_n \rightarrow (P_0)_n \rightarrow (P^0)_n \rightarrow (P^1)_n \rightarrow \cdots$$ and it does have $X_n = \ker{((P^0)_n \rightarrow (P^1)_n)}$. So it is left to show that this remains exact after applying $\Hom_R(-,L)$ for any level $R$-module $L$. But given any such $L$, we get that $D^{n+1}(L)$ is level from Proposition~\ref{prop-level chain complexes}. Using the standard adjunction $\Hom_{\ch}(Y,D^{n+1}(L)) \cong \Hom_R(Y_n,L)$, we see that the complex of abelian groups $$\cdots \rightarrow \Hom_R((P^1)_n, L) \rightarrow \Hom_R((P^0)_n, L) \rightarrow \Hom_R((P_0)_n, L) \rightarrow \cdots$$ is isomorphic to the one obtained by applying $\Hom_{\ch}(-,D^{n+1}(L))$ to the original projective resolution of $X$. Since the latter complex is exact, we conclude $X_n$ is Gorenstein AC-projective.

Next we wish to show that for any level chain complex $L$, the complex $\homcomplex(X,L)$ is exact. Since $X$ is Gorenstein AC-projective it follows from the definition that, whenever $L$ is level, then $\Ext^n_{\ch}(X,L) = 0$ for all $n\geq1$. In particular, we get $\Ext^1_{dw}(X,L) = 0$ for all level complexes $L$. Since for any $n$, $\Sigma^{-n-1}L$ is also certainly level whenever $L$ is level, we get using~\cite[Lemma~2.1]{gillespie} that $0= \Ext^1_{dw}(X,\Sigma^{-n-1}L) \cong H_n[\homcomplex(X,L)]$. So $\homcomplex(X,L)$ is exact. 

\noindent ($\Leftarrow$) Now suppose each $X_n$ is a Gorenstein AC-projective $R$-module and $\homcomplex(X,L)$ is exact for any level chain complex $L$. We wish to construct a complete projective resolution of $X$ satisfying Definition~\ref{def-Gorenstein AC-projective complex}. We start by using that $\ch$ has enough projectives, and write a short exact sequence $0 \rightarrow K \rightarrow P_0 \rightarrow X \rightarrow 0$ where $P_0$ is a projective complex. Since the class of Gorenstein AC-projective $R$-modules is resolving (by~\cite[Lemma~8.6]{bravo-gillespie-hovey}) we see that each $K_n$ is also Gorenstein AC-projective. We claim that $K$ also satisfies that $\homcomplex(K,L)$ is exact for all level complexes $L$. Indeed for any choice of integers $n,k$ and a level complex $L$, we have $\Ext^1_R(X_k,L_{k+n}) = 0$ since $L_{k+n}$ is a level $R$-module and $X_{k}$ is Gorenstein AC-projective. Therefore we get a short exact sequence for all $n,k$:
$$0 \rightarrow \Hom_R(X_k,L_{k+n}) \rightarrow \Hom_R((P_0)_{k}, L_{k+n}) \rightarrow \Hom_R(K_k,L_{k+n}) \rightarrow 0.$$
Since products of short exact sequences of abelian groups are again exact, we get the short exact sequence
$$0 \rightarrow \prod_{k \in \Z}\Hom_R(X_k,L_{k+n}) \rightarrow \prod_{k \in \Z}\Hom_R((P_0)_{k}, L_{k+n}) \rightarrow \prod_{k \in \Z}\Hom_R(K_k,L_{k+n}) \rightarrow 0.$$
But this is degree $n$ of $0 \rightarrow \homcomplex(X,L)  \rightarrow \homcomplex(P_0,L)  \rightarrow  \homcomplex(K,L) \rightarrow 0$, and so this is a short exact sequence of complexes. Since $\homcomplex(X,L)$  and $\homcomplex(P_0,L)$ are each exact it follows that $\homcomplex(K,L)$ is also exact as claimed. Since $K$ has the same properties as $X$ we may inductively obtain a projective resolution $$\cdots \xrightarrow{d_3} P_2 \xrightarrow{d_2} P_1 \xrightarrow{d_1} P_0 \xrightarrow{\epsilon} X \xrightarrow{} 0$$ where each $K_i = \im{d_i} \ (i \geq 1)$ is degreewise Gorenstein AC-projective and which satisfies that $\homcomplex(K_i,L)$ is exact for any level complex $L$. This resolution must remain exact after applying $\Hom_{\ch}(-,L)$ for any level $L$ because $\Ext^1_{\ch}(K_i, L) = \Ext^1_{dw}(K_i, L) \cong H_{-1}[\homcomplex(K_i, L)] = 0$, where again we have used~\cite[Lemma~2.1]{gillespie}.

It is left then to extend $\cdots \xrightarrow{d_3} P_2 \xrightarrow{d_2} P_1 \xrightarrow{d_1} P_0 \xrightarrow{\epsilon} X \xrightarrow{} 0$ to the right to obtain a complete resolution satisfying Definition~\ref{def-Gorenstein AC-projective complex}. First note that there is an obvious (degreewise split) short exact sequence $$0 \rightarrow X \xrightarrow{(1,d)} \bigoplus_{n \in \Z}D^{n+1}(X_n) \xrightarrow{-d+1} \Sigma X \rightarrow 0.$$
Now each $X_n$ is Gorenstein AC-projective. So we certainly can find a short exact sequence
$0 \rightarrow X_n \xrightarrow{\alpha_n} Q_n \xrightarrow{\beta_n} Y_n \rightarrow 0$ where $Q_n$ is projective and $Y_n$ is also Gorenstein AC-projective. This gives us another short exact sequence 
\[
\bigoplus_{n \in \Z}D^{n+1}(X_n) \xrightarrow{ \bigoplus_{n \in \Z}D^{n+1}(\alpha_n)} \bigoplus_{n \in \Z}D^{n+1}(Q_n) \xrightarrow{\bigoplus_{n \in \Z}D^{n+1}(\beta_n)} \bigoplus_{n \in \Z}D^{n+1}(Y_n).
\]
Notice that $\bigoplus_{n \in \Z}D^{n+1}(Q_n)$ is a projective complex and we will denote it by $P^0$. Furthermore, let $\eta : X \rightarrow P^0$ be the composite 
\[
X \xrightarrow{(1,d)} \bigoplus_{n \in \Z}D^{n+1}(X_n) \xrightarrow{\bigoplus_{n \in \Z}D^{n+1}(\alpha_n)} \bigoplus_{n \in \Z}D^{n+1}(Q_n).
\]
Then $\eta$ is an monomorphism since it is the composite of two monomorphisms. Moreover, setting $C^0 = \cok{\eta}$, it follows from the snake lemma that $C^0$ sits in the short exact sequence $0 \rightarrow   \Sigma X \xrightarrow{} C^0 \xrightarrow{}  \bigoplus_{n \in \Z}D^{n+1}(Y_n)  \rightarrow 0$. In particular, $C^0$ is an extension of $\bigoplus_{n \in \Z}D^{n+1}(Y_n)$ and $\Sigma X$, and so $C^0$ must be Gorenstein AC-projective in each degree since both of $\bigoplus_{n \in \Z}D^{n+1}(Y_n)$ and $ \Sigma X$ are such. Because of this, if $L$ is any level complex, applying $\homcomplex(-,L)$ to the  short exact sequence
$0 \rightarrow X \xrightarrow{} P^0 \xrightarrow{} C^0 \rightarrow 0$ will yield the short exact  $0 \rightarrow \homcomplex(C^0,L)  \rightarrow \homcomplex(P^0,L)  \rightarrow  \homcomplex(X,L) \rightarrow 0$ of chain complexes. And also $\homcomplex(C^0,L)$ must be exact since  $\homcomplex(P^0,L)$ and $\homcomplex(X,L)$ are. Since $C^0$ has the same properties as $X$, we may continue inductively to obtain the desired resolution 
$$0 \xrightarrow{} X \xrightarrow{\eta} P^0 \xrightarrow{d^0} P^1 \xrightarrow{d^1} P^2 \xrightarrow{d^2} \cdots$$
Finally, we paste the resolution together with $\cdots \xrightarrow{} P_2 \xrightarrow{d_2} P_1 \xrightarrow{d_1} P_0 \xrightarrow{\epsilon} X \rightarrow 0$
by setting $d_0 = \eta \epsilon$ and we are done.
\end{proof}


\end{document}